\theoremstyle{plain}
\newtheorem{theorem}{Theorem}
\newtheorem{lemma}[theorem]{Lemma}
\newtheorem{corollary}[theorem]{Corollary}
\theoremstyle{definition}
\newtheorem{assumption}{Assumption}
\theoremstyle{remark}
\newtheorem{remark}[theorem]{Remark}
\newtheorem{example}[theorem]{Example}
\renewcommand{\d}{\mathrm{d}}
\renewcommand{\complement}{\texttt{C}}
\title{Uniform ergodicity of geodesic slice sampling} 
\author{Mareike Hasenpflug\thanks{University of Passau, Germany, Email: mareike.hasenpflug@uni-passau.de}}
\begin{document}

\maketitle

\begin{abstract}
	Geodesic slice sampling, introduced in \citep{durmus2023geodesic}, is a slice sampling based Markov chain Monte Carlo method for approximate sampling from distributions on Riemannian manifolds.
	We prove that it is uniformly ergodic for distributions with compact support that have a bounded density with respect to the Riemannian measure.
	The constants in our convergence bound are available explicitly, and we investigate their dependence on the hyperparameters of the geodesic slice sampler, the target distribution and the underlying domain.
\end{abstract}

\section{Introduction}
Sampling tasks, appearing for example in Bayesian inference, numerical integration or optimization, can be performed with Markov chain Monte Carlo methods.
In contrast to tailor made samplers for specific distributions, 
the strategy of these methods is to approximate the distribution of interest with a Markov chain,
which achieves a wide applicability.
At the same time this renders it important to understand how fast the approximation of the target distribution takes place in order to assess the performance.
In the language of Markov chains, the convergence of the $n$-step distributions, i.e. the distribution of the $n$th state of the Markov chain,
is termed ergodicity.
Crucially this depends on the Markov kernel of the chain, which essentially describes the transition mechanism used by the Markov chain to pass from one state to the next.
Different notions of ergodicity have established to qualitatively and quantitatively describe 
this convergence.
They range from pure ergodicity, 
i.e.\ convergence of the $n$-step distributions to the target distribution in total variation distance for all starting points,
but with an unknown and therefore potentially arbitrarily slow rate,
to comparably strong properties like uniform ergodicity,
which provides an exponential convergence speed and worst case convergence guarantees at the same time.
Expressed precisely, a Markov kernel $K$ on a measurable space $(\mathsf{X}, \mathcal{X})$ is uniformly ergodic if there exist constants $\upvarrho \in [0, 1)$ and $C \in [0, \infty)$ such that
\begin{equation}\label{Eq: genreal uniform ergodicity}
	\sup_{x \in \mathsf{X}} d_{\mathrm{tv}}\big(K^n(x, \cdot), \pi\big) \leq C \upvarrho^n \qquad \text{for all } n \in \mathbb{N},
\end{equation}
where $d_{\mathrm{tv}}$ denotes the total variation distance and $\pi$ is the invariant distribution of $K$.
Beyond the strong statement about the decrease of the total variation distance between the $n$-step distributions of the Markov chain and the target distribution,
uniform ergodicity implies many other desirable properties, e.g.\
asymptotic and explicit error bounds on the mean squared integration error  \citep{mathe1999numerical} and \citep[Theorem 3.34]{rudolf2012explicit},
a Hoeffding type inequality \citep{glynn2002hoeffding},
or a central limit theorem \citep{jones2004on}.
For several Markov chain Monte Carlo methods, conditions under which they are uniformly ergodic are known.
For example an independent Metropolis Hastings sampler is uniformly ergodic if and only if the ratio of proposal and target density is bounded away from zero \citep{mengersen1996rates},
and an ideal slice sampler is uniformly ergodic if the target density is bounded and the reference measure is finite \citep{mira2002efficiency}.

In this paper, we establish conditions for the uniform ergodicity of geodesic slice sampling.
Geodesic slice sampling, introduced in \citep{durmus2023geodesic}, 
can be used to approximately sample from distributions $\pi$ on a Riemannian manifold that are absolutely continuous with respect to the Riemannian measure.
The transition mechanism of this gradient free method works by choosing a random unit speed geodesic through the current point and then running a 1-dimensional hybrid slice sampler following the ideas of  \cite{neal2003slice} along the random geodesic.
Thus, it can be considered as a hybridisation of the ideal slice sampler on Riemannian manifolds that uses the Riemannian measure as reference measure \citep{latuszynski2024convergence}.

\cite{durmus2023geodesic} show that under mild regularity assumptions the geodesic slice sampler is reversible with respect to the target distribution $\pi$ and its $n$-step distributions converge to it in total variation distance.
However, nothing is known about the speed of this convergence so far,
calling for more quantitative convergence statements as the manifold setting becomes increasingly popular in sampling \citep{diaconis2013sampling, brubaker2012family, zappa2018monte, yang2024stereographic}.
Although among notions of (quantitative) convergence uniform ergodicity is a strong statement, 
other (weaker) forms of convergence may be often more useful in practice due to the equally strong assumptions usually required for uniform ergodicity to hold (see e.g. the conditions for independent Metropolis Hastings algorithm and ideal slice sampling above).
Also for geodesic slice sampling we require a suitable compact setting in order to establish uniform ergodicity.
However, when working with manifolds, this setting arises much more natural than in Euclidean space.
For instance the Stiefel manifold, the Grassmann manifold or the Euclidean unit sphere are compact Riemannian manifolds which naturally appear in several applications \citep{mantoux2021understanding, mardia09directional, marignier2023posterior, lui2012advances, tsilifis2018bayesian, birdal2018bayesian}.
This inherent wider applicability renders uniform ergodicity an even more desirable property in a Riemannian manifold setting.

Describing the content of this paper in more detail, we show that for distributions with compact support and bounded density the geodesic slice sampler is uniformly ergodic and provide explicit constants such that \eqref{Eq: genreal uniform ergodicity} holds.
In particular, we get an insight how our convergence guarantee depends on the choice of the hyperparameters of the geodesic slice sampler and how they should be chosen to get the best bound.
Moreover, we see that the main influence of the target distribution onto the guaranteed convergence rate $\upvarrho$ is through its level set function,
a quantity that is known to control the convergence behaviour of ideal slice samplers (see Remark \ref{R: Dependence of ergodicity constant on target distirbution}).
Note that we also extend the set of allowable hyperparameters for the geodesic slice sampler.

We set our uniform ergodicity result into context with what is known about the convergence behaviour of other Markov chain Monte Carlo methods for distributions on Riemannian manifolds.
\cite{takeda2023geometric} show that for compact manifolds the  Hamiltonian Monte Carlo method for distributions on embedded submanifolds of $\mathbb{R}^d$ that are the preimage of a point under a smooth defining map proposed by \cite{brubaker2012family} is uniformly ergodic.
Notions of convergence differing from uniform ergodicity are treated by the subsequent works.
Beyond the compact case, \cite{brubaker2012family} provide regularity conditions under which their Hamiltonian Monte Carlo method is still ergodic.
By \cite{whalley2024randomized}, this result is extended to a Hamiltonian Monte Carlo method that uses a random integration duration.
For a geodesic-based Metropolis-Hastings algorithm, \cite{goyal2019sampling} show mixing time bounds from a warm start for uniform distributions on geodesically convex sets on manifolds with non-negative sectional curvature.
Moreover, \cite{cheng2022efficient} establish Wasserstein-1-bounds in a bounded curvature setting for a Markov chain Monte Carlo method that is based on the discretisation of a Langevin diffusion.

The rest of this paper is organised a follows.
We start our investigation with some notation and a brief introduction to the Riemannian setting in the next section.
Our main result, including proof and discussion, is contained in Section \ref{S: main result}.
There the reader can also find a brief recapitulation of the geodesic slice sampler.
The appendix provides some auxiliary results about the stepping-out and shrinkage procedure, which is used within the geodesic slice sampler.

\section{Notation and manifold setting}
\paragraph{Basic notation.}
In the following two paragraphs we cover some general notation used throughout this paper.
We use $\mathbb{N}$ to denote the strictly positive integers and $d \in \mathbb{N}$ to denote the dimension of a space, when applicable.
The $(d-1)$-dimensional Euclidean unit sphere in $\mathbb{R}^d$ is denoted as $\mathbb{S}^{d-1}$. 
For a subset $\mathsf{A} \subseteq \mathbb{R}^d$ we write $\mathrm{conv}(\mathsf{A})$ for its convex hull,
and for the supremum norm of a function $f: \mathsf{X} \to \mathbb{R}$ we write $\|f\|_\infty:= \sup_{x \in \mathsf{X}} |f(x)|$.
Moreover, let $\partial_i\vert_{z}$ be the partial derivative with respect to the $i$th coordinate evaluated at $z \in \mathbb{R}^{d}$.
Given a topological space $\mathsf{X}$, the closure of a set $\mathsf{A} \subseteq \mathsf{X}$ is denoted as $\overline{\mathsf{A}}$ and its interior as $\mathrm{int}(\mathsf{A})$.
In the following, we always assume that $\mathbb{R}^d$ is equipped with the standard topology,
and subsets of $\mathbb{R}^d$ inherit the corresponding subspace topology.
If we operate on a metric space $(\mathsf{X}, m)$, we set $\mathrm{diam}(\mathsf{A}) := \sup_{x,y \in \mathsf{A}} m(x,y) \in [0, \infty]$ for non-empty subsets $\mathsf{A} \subseteq \mathsf{X}$.
Moreover, define for $x \in \mathsf{X}$ and $r > 0$ the metric ball $B_r(x):= \{ y \in \mathsf{X} \mid m(x,y) < r\}$.
In the following, we always assume that $\mathbb{R}^d$ is equipped with the standard inner product.

We turn to some general probabilistic notation.
For a given topological space $\mathsf{X}$, we denote by $\mathcal{B}(\mathsf{X})$ its Borel-$\sigma$-algebra.
Moreover, the $d$-dimensional Lebesgue measure is denoted as $\mathrm{Leb}_{d}$.
For a set $\mathsf{A} \in \mathcal{B}(\mathbb{R}^d)$ with $\mathrm{Leb}_{d}(\mathsf{A})\in (0, \infty)$, let $\mathrm{Unif}(\mathsf{A}) := \mathrm{Leb}_{d}(\mathsf{A})^{-1} \mathrm{Leb}_{d}(\cdot \cap \mathsf{A})$ be the uniform distribution on $\mathsf{A}$.
Similarly, if $\mathsf{A}$ has finitely many elements, we set $\mathrm{Unif}(\mathsf{A})$ to be the discrete uniform distribution on $\mathsf{A}$.
Let $(\mathsf{X}, \mathcal{X})$ and $(\mathsf{Y}, \mathcal{Y})$ be two measurable spaces.
For a measure $\mu$ on $(\mathsf{X}, \mathcal{X})$ and a measurable function $f: \mathsf{X} \to \mathsf{Y}$,
we denote by $f_\sharp(\mu) := \mu(f^{-1}(\cdot))$ the pushforward measure on $(\mathsf{Y}, \mathcal{Y})$ of $\mu$ under $f$.
Any random variables that appear in the following are assumed to be defined on some rich enough probability space $(\Omega, \mathcal{A}, \mathbb{P})$.
Given a probability measure $\mu$, we use $Z \sim \mu$ to express that the random variable $Z$ is distributed according to $\mu$.
If $\nu$ is a second probability measure on the same measurable space $(\mathsf{X}, \mathcal{X})$ as $\mu$,
then we denote by $d_{\mathrm{tv}}(\mu, \nu):= \sup_{\mathsf{A} \in \mathcal{X}} |\mu(\mathsf{A})- \nu(\mathsf{A})|$ the total variation distance between $\mu$ and $\nu$.
Finally, we say that a mapping $K: \mathsf{X} \times \mathcal{X} \to [0,1]$ is a transition or Markov kernel if $K(x, \cdot)$ is a probability measure on $(\mathsf{X}, \mathcal{X})$ for all $x \in \mathsf{X}$ and the function $K(\cdot, \mathsf{A}): \mathsf{X} \to [0,1]$ is measurable for all $\mathsf{A} \in \mathcal{X}$.
Then we inductively define the transition kernels $K^1 := K$ and $K^{n+1}(x, \mathsf{A}) := \int_{\mathsf{X}} K^n(y, \mathsf{A}) \, K(x, \d y)$, $x \in \mathsf{X}, \mathsf{A} \in \mathcal{X}$, for all $n \in \mathbb{N}$.
 
\paragraph{Manifold setting.}
The rest of this section outlines the notions from differential geometry relevant for this paper and fixes the used notation.
We purposely remain brief here and refer the reader to \citep{boothby1986introduction, lee2013introduction, lee2018introduction} 
for an introduction to differential geometry or to \citep[Section 2.1]{hasenpflug2024slice} for a more detailed overview of the content of this section.

A $d$-dimensional manifold is a second countable Hausdorff space such that for every point there exists an open neighbourhood that is homeomorphic to an open subset of $\mathbb{R}^d$.
In this paper we consider Riemannian manifolds, which are smooth manifolds that are equipped with a smooth field of positive definite, symmetric bilinear forms. This smooth field of bilinear forms, called the Riemannian metric, induces a natural measure on the manifold, called the Riemannian measure.
For a $d$-dimensional Riemannian manifold $\mathsf{M}$, which is in particular a topological space, 
it is defined on the Borel-$\sigma$-algebra $\mathcal{B}(\mathsf{M})$.
For all points $x \in \mathsf{M}$, we denote by $T_x\mathsf{M}$ the tangent space to $\mathsf{M}$ at $x$.
To characterise the Riemannian measure on a coordinate neighbourhood $(\mathsf{U}, \varphi)$ of $\mathsf{M}$,
we need the coordinate frames $E_1^{\varphi}, \ldots, E_d^{\varphi}$ induced by $(\mathsf{U}, \varphi)$, which are $d$ vector fields on $\mathsf{U}$ given by
\[
E_i^{\varphi}: \mathsf{U} \to \bigcup_{x \in \mathsf{M}} T_x\mathsf{M}, \qquad x \mapsto E_{i,x}^{\varphi}:= \varphi^{-1}_\ast\left (\partial_i\vert_{\varphi(x)}\right ) \in T_x\mathsf{M}, \qquad i \in \{1, \ldots, d\},
\]
where $F_\ast$ denotes the differential of a $C^\infty$-map $F$ between two smooth manifolds. 
Observe that $E_{1,x}^{\varphi}, \ldots, E_{d,x}^{\varphi}$ is a basis of the vector space $T_x\mathsf{M}$ for all $x \in \mathsf{U}$,
such that their Gram matrix $\left(\mathfrak{g}_x(E_{i,x}^{\varphi}, E_{j, x}^{\varphi})\right)_{\{1\leq i,j \leq d\}}$ in the inner product space $(T_x\mathsf{M}, \mathfrak{g}_x)$ is regular.
We define
\[
	\sqrt{\det(\mathfrak{g},\varphi)}: \mathsf{U} \to (0, \infty), \qquad x \mapsto \sqrt{\det\left[\left(\mathfrak{g}_x(E_{i,x}^{\varphi}, E_{j, x}^{\varphi})\right)_{\{1\leq i,j \leq d\}}\right]},
\]
where $\det$ denotes the determinant of a matrix.
Roughly speaking, this function captures the infinitesimal deformation when passing from $\mathbb{R}^d$ to $\mathsf{M}$ via the homeomorphism $\varphi^{-1}$.
Due to the second countability of $\mathsf{M}$, 
there exists a countable collection of coordinate neighbourhoods $\{(\mathsf{U}_i, \varphi_i)\}_{i \in \mathbb{N}}$ such that $\mathsf{M} = \bigcup_{i \in \mathsf{N}} \mathsf{U}_i$, which we call a countable atlas.
Let $\{\rho_i: \mathsf{M} \to [0, \infty)\}_{i \in \mathbb{N}}$ be a partition of unity subordinate to $\{\mathsf{U}_i\}_{i \in \mathbb{N}}$,
i.e.\ a partition of unity such that the support of $\rho_i$ is a subset of $\mathsf{U}_i$ for all $i \in \mathbb{N}$.
The Riemannian measure is then given by
\[
	\nu_{\mathfrak{g}}(\mathsf{A}) = \sum_{i \in \mathbb{N}} \int_{\varphi_i(\mathsf{U}_i)} \left(\rho_i \cdot \mathbbm{1}_{\mathsf{A}} \cdot \sqrt{\det(\mathfrak{g}, \varphi_i)}\right) \circ \varphi_i^{-1}(z) \ \mathrm{Leb}_{d}(\d z), \qquad \mathsf{A} \in \mathcal{B}(\mathsf{M}).
\]
This construction is independent of the choice of the countable atlas and the partition of unity.

In addition to the Riemannian measure, the Riemannian metric $\mathfrak{g}$ induces also a metric $\mathrm{dist}$ on $\mathsf{M}$.
The topology from this metric agrees with the topology on $\mathsf{M}$, turning $\mathsf{M}$ into a metric space.
Consequently also on a Riemannian manifold metric balls and the diameter of a set are defined.

We cover some more objects from Riemannian geometry needed throughout this paper.
\begin{enumerate}
	\item \textbf{Unit tangent bundle:}
		The tangent bundle $T\mathsf{M}:= \bigcup_{x \in \mathsf{M}} \{x\} \times T_x\mathsf{M}$ equipped with the Sasaki metric $\mathfrak{G}$ is a $2d$-dimensional Riemannian manifold.
		For $x \in \mathsf{M}$ we denote by 
		\[
		U_x\mathsf{M}:= \{ v \in T_x\mathsf{M} \mid \mathfrak{g}_x(v,v) = 1\}
		\]
		the unit sphere in the inner product space $(T_x\mathsf{M}, \mathfrak{g}_x)$, called the unit tangent spheres.
		Their (disjoint) union forms an embedded submanifold of $T\mathsf{M}$ called the unit tangent bundle $U\mathsf{M}:= \bigcup_{x \in \mathsf{M}} \{x\} \times U_x\mathsf{M}$.
		Moreover, also the unit tangent spheres inherit a Riemannian structure from $T\mathsf{M}$ as embedded submanifolds. 
		Denoting the resulting Riemannian metric on $U_x\mathsf{M}$ as $\iota^\ast\mathfrak{g}^{(x)}$ for $x \in \mathsf{M}$,
		this yields a Riemannian measure $\nu_{\iota^\ast\mathfrak{g}^{(x)}}$ on $U_x\mathsf{M}$.
		Equip the Euclidean unit sphere $\mathbb{S}^{d-1}$ with the standard or round metric $\widehat{\mathfrak{g}}$,
		and denote by $\upomega_{d-1}:= \nu_{\widehat{\mathfrak{g}}}(\mathbb{S}^{d-1})$ the volume of the Euclidean unit sphere.
		We have the following relation of $\nu_{\iota^\ast\mathfrak{g}^{(x)}}$ to the Riemannian measure $\nu_{\widehat{\mathfrak{g}}}$ on $\mathbb{S}^{d-1}$.
		If $\Phi: \mathbb{R}^{d} \to T_x\mathsf{M}$ is an isometry, 
		then for the pushforward measure of $\nu_{\iota^\ast\mathfrak{g}^{(x)}}$ under $\Phi$ holds
		\begin{equation}\label{Eq: Uniform distributions on unit tangent spheres as pushforward measures}
			\Phi_\sharp(\nu_{\widehat{\mathfrak{g}}}) = \nu_{\iota^\ast\mathfrak{g}^{(x)}}.
		\end{equation}
		Therefore
		\[
		\sigma^{(x)} := \frac{1}{\upomega_{d-1}} \nu_{\iota^\ast\mathfrak{g}^{(x)}}, \qquad x \in \mathsf{M},
		\]
		are probability measures on the unit tangent spheres for all $x \in \mathsf{M}$.
	\item \textbf{Geodesics:} Curves $\gamma : \mathsf{I} \to \mathsf{M}$, from an interval $\mathsf{I} \subseteq \mathbb{R}$ to the 
		Riemannian manifold $\mathsf{M}$, where the covariant derivative $\smash{\frac{\mathrm{D}}{\d t}}$ of the velocity vector field $\smash{\frac{\d \gamma_{(x,v)}}{\d t}}$ is zero everywhere are called geodesics.
		That is, geodesics are curves that satisfy $\smash{\frac{\mathrm{D}}{\d t}\frac{\d \gamma_{(x,v)}}{\d t}(s) = 0}$ for all $s \in \mathsf{I}$ or phrased more intuitively, geodesics are the curves of constant velocity.
		If every geodesic can be extended to have domain $\mathbb{R}$, the manifold $\mathsf{M}$ is called geodesically complete.
		Then for all $(x,v) \in T\mathsf{M}$ there exists a unique geodesic $\gamma_{(x,v)}: \mathbb{R} \to \mathsf{M}$ with $\gamma_{(x,v)}(0)= x$ and $\smash{\frac{\d \gamma_{(x,v)}}{\d t}(0)= v}$.
		Expressed in terms of the exponential map $\mathrm{Exp}: T\mathsf{M} \to \mathsf{M}$, this means $\gamma_{(x,v)}(\theta) = \mathrm{Exp}(x,\theta v)$ for all $\theta \in \mathbb{R}$.
		If a segment of a geodesic is the shortest path between its two endpoints, we call it minimizing.
		In more precise words, 
		for $s > 0$ the restriction $\gamma_{(x,v)}\vert_{[0,s]}$ of the geodesic $\gamma_{(x,v)}$ to the interval $[0,s]$ is minimizing if $\mathrm{dist}(x, \gamma_{(x,v)}(s)) = s$.
		Locally, all geodesics are minimizing.
	\item \textbf{Cut times:} Let $\mathsf{M}$ be a connected, geodesically complete Riemannian manifold.
		For $(x,v) \in T\mathsf{M}$ the associated cut time is defined as $t_{\mathrm{cut}}(x,v)\linebreak := \sup\{s \in (0, \infty) \mid \gamma_{(x,v)}\vert_{[0,s]} \text{ is minimizing}\}$.
		From this we can derive the cut locus $\mathrm{Cut}(x) := \{ \gamma_{(x,v)}(t_{\mathrm{cut}}(x,v)) \in \mathsf{M} \mid v \in U_x\mathsf{M}, t_{\mathrm{cut}}(x,v) < \infty\}$ at $x \in \mathsf{M}$.
		Note that cut loci are $\nu_{\mathfrak{g}}$-null sets.
		Moreover, for all $x \in \mathsf{M}$ the restriction of the exponential map $\mathrm{Exp}$ to the set $\{sv \in T_x\mathsf{M}\mid v \in U_x\mathsf{M}, s < t_{\mathrm{cut}}(x,v) \}$ is a diffeomorphism onto $\mathsf{M} \setminus \mathrm{Cut}(x)$.
		Through concatenation of these restricted exponential maps with isometries from $\mathbb{R}^d$ to the respective tangent spaces,
		one can obtain coordinate neighbourhoods of $\mathsf{M}$ that are adapted to the geodesics through a given point $x \in \mathsf{M}$.
		They are called normal coordinate neighbourhoods and we use the notation $(\mathsf{M}\setminus \mathrm{Cut}(x), \varphi_x)$.\\
		The infimum over all cut times is called injectivity radius $\mathrm{inj}(\mathsf{M}) := \inf_{(x,v) \in U\mathsf{M}} t_{\mathrm{cut}}(x,v)$.
		Observe that the supremum over all cut times is equal to the diameter of the manifold.
	\item \textbf{Ricci curvature:} We denote the curvature of the Riemannian manifold $(\mathsf{M}, \mathfrak{g})$ as $\mathcal{R}$.
		Then for all vector fields $X,Y$ and $Z$ on $\mathsf{M}$ also $\mathcal{R}(X,Y)Z$ is a vector field on $\mathsf{M}$.
		In fact, the value of the vector field $(\mathcal{R}(X,Y)Z)_x \in T_x\mathsf{M}$ at some point $x \in \mathsf{M}$ does not depend on the whole vector fields $X$, $Y$ and $Z$, but only on their values at the point $x$.
		Moreover, this quantity is linear in all three components.
		Therefore the Ricci curvature
		\[
			\mathrm{Ric}: U\mathsf{M} \to \mathbb{R},\qquad (x,v) \mapsto \mathrm{tr}\big(T_x\mathsf{M}\to T_x\mathsf{M},\ z \mapsto \mathcal{R}(z, v) v\big),
		\]
		where $\mathrm{tr}$ denotes the trace of a linear map, is well defined.
		We may understand the Ricci curvature as a summary quantity of the curvature.
	\item \textbf{Open submanifolds:} Any open subset $\mathsf{U}$ of a Riemannian manifold $(\mathsf{M}, \mathfrak{g})$ 
		is an embedded submanifold of $\mathsf{M}$.
		As such it inherits a Riemannian structure from the ambient Riemannian manifold $\mathsf{M}$ and thus becomes a Riemannian manifold itself.
		By virtue of the differential of the inclusion map $\mathsf{U} \to \mathsf{M}$, 
		we may consider its tangent bundle $T\mathsf{U}$ as a subset of $T\mathsf{M}$.
\end{enumerate}

\section{Main result}\label{S: main result}
Let $(\mathsf{M}, \mathfrak{g})$ be a connected, geodesically complete Riemannian manifold.
We consider target distributions on $\mathsf{M}$ that are absolutely continuous with respect to the Riemannian measure $\nu_{\mathfrak{g}}$ induced by $\mathfrak{g}$ on $\mathsf{M}$.
From such target distributions we can approximately sample with the geodesic slice sampler \citep{durmus2023geodesic}.
More precisely, let $p: \mathsf{M} \to [0, \infty)$ be a lower semi-continuous\footnote{In particular, the lower semi-continuity of $p$ ensures that $\mathsf{W}$ is an open submanifold of $\mathsf{M}$.} function such that $\mathsf{W}:= \{x \in \mathsf{M} \mid p(x)> 0\}$ is connected and $Z:= \int_{\mathsf{M}} p \ \d \nu_{\mathfrak{g}} \in (0, \infty)$.
The target distribution $\pi$ is then defined as 
\begin{equation}\label{Eq: target density}
	\pi(\d x) := \frac{1}{Z} p(x) \nu_{\mathfrak{g}}(\d x).
\end{equation}
For the sake of remaining self-contained, we provide a definition of the transition kernel of the geodesic slice sampler targeting the distribution $\pi$. 
However, for further background we refer to \citep{durmus2023geodesic} and \cite[Section 5.3]{hasenpflug2024slice}.
Geodesic slice sampling uses the superlevel sets
\[
	\mathsf{L}(t) := \{ x \in \mathsf{M} \mid p(x)> t\}, \qquad t \in (0, \infty)
\]
and the geodesic superlevel sets
\[
	\mathsf{L}(x,v,t) := \{ \theta \in \mathbb{R} \mid \gamma_{(x,v)}(\theta) \in \mathsf{L}(t)\}, \qquad (x,v) \in U\mathsf{M}, t \in (0, \infty).
\]
Moreover, for $(x,v) \in U\mathsf{M}$ define by 
\[
	\mathsf{W}(x,v) := \{ \theta \in \mathbb{R} \mid \gamma_{(x,v)}(\theta) \in \mathsf{W}\} = \bigcup_{t \in (0, \infty)} \mathsf{L}(x,v,t)
\]
the segment of the associated geodesic where $p$ is strictly positive.
Fix hyperparameters $m$ and $w$ that satisfy the following assumption.
\begin{assumption}\label{A: hyperparameters}
	Let $w \in (0, \infty)$.
	If 
	\[
		\uplambda := \sup_{(x,v) \in U\mathsf{M}}\mathrm{diam}\big(\mathsf{W}(x,v)\big) < \infty,
	\]
	then let $m \in \mathbb{N} \cup \{\infty\}$.
	Otherwise let $m \in \mathbb{N}$.
\end{assumption}
Observe that since $\mathsf{M}$ may have a finite injectivity radius, 
$\mathrm{diam}(\mathsf{W}) < \infty$ does not automatically imply $\uplambda < \infty$. 
Indeed, if the ambient manifold $\mathsf{M}$ itself is compact, we expect that in this case $\uplambda$ is usually infinite.
For simplicity, we do not express any dependencies on the hyperparameters $m$ and $w$ in the subsequent notation.

These hyperparameters are needed to specify a building block of the geodesic slice sampler: 
The stepping-out distributions $\xi_{\mathsf{L}(x,v,t)}$  are probability distributions on $\mathbb{R}^2$ for all $(x,v) \in U\mathsf{M}$, $t \in (0, p(x))$.
They describe the distribution of a random interval, that is, 
the first coordinate of $\mathbb{R}^2$ is interpreted as the left boundary of an interval in $\mathbb{R}$ and the second coordinate as the right boundary.
For more details we refer to Appendix\footnote{For brevity we drop here the dependence of the stepping-out distribution on its starting point, because when used within the geodesic slice sampler it is always $\theta = 0$.} \ref{S: stepping-out distribution}.

To generate a point from the intersection of the output of the stepping-out procedure and a given geodesic superlevel set,
the geodesic slice sampler then employs the shrinkage procedure.
We express the distribution of its output\footnote{More generally the shrinkage procedure defines a transition kernel. For brevity we use the shorthand notation $Q_{\mathsf{S}}^{\ell,r}(\cdot) := Q_{\mathsf{S}}^{\ell,r} (0,\cdot)$.}
as $Q_{\mathsf{L}(x,v,t)}^{\ell,r}$ for $(x,v) \in U\mathsf{M}$, $t \in (0, p(x))$ and $\ell,r \in \mathbb{R}$ with $(\ell,r) \cap \mathsf{L}(x,v,t) \neq \emptyset$, where $(\ell,r)$ corresponds to the output of the stepping-out procedure.
For more details we refer to Appendix \ref{S: shrinkage kernel}.

With this notation at hand, the transition kernel of the geodesic slice sampler is then given by
\begin{align*}
	K: \mathsf{W} \times \mathcal{B}(\mathsf{W}) &\to [0,1],\\
	(x, \mathsf{A}) &\mapsto \begin{multlined}[t]
		\frac{1}{p(x)} \int_{(0, p(x))} \int_{U_x\mathsf{M}} \int_{\mathbb{R}^2} \int_{\mathsf{L}(x,v,t) \cap (\ell,r)} \mathbbm{1}_{\mathsf{A}}\big(\gamma_{(x,v)}(\theta)\big)\\
			\times Q^{\ell,r}_{\mathsf{L}(x,v,t)}(\d \theta)\, \xi_{\mathsf{L}(x,v,t)}(\d (\ell,r))\, \sigma^{(x)}(\d v)\, \mathrm{Leb}_{1}(\d t).
	\end{multlined}
\end{align*}
We formulate our main result, which establishes the uniform ergodicity of this kernel in a compact setting.
\begin{theorem}\label{Thm: uniform ergodicity}
	Let $(\mathsf{M}, \mathfrak{g})$ be a geodesically complete, connected Riemannian manifold.
	Assume that $\pi$ is defined as in \eqref{Eq: target density}
	with unnormalised target density $p: \mathsf{M}\to [0, \infty)$ satisfying 
	\begin{itemize}
		\item $p$ is lower semicontinuous and $\int_{\mathsf{M}} p(x) \ \nu_{\mathfrak{g}}(\d x) < \infty$,
		\item $\|p\|_{\infty}< \infty$,
		\item $\mathsf{W}:= \{x \in \mathsf{M} \mid p(x)> 0\}$ is connected and relatively compact.
	\end{itemize}
	Fix hyperparameters $w \in (0, \infty)$ and $m \in \mathbb{N} \cup \{\infty\}$ such that Assumption \ref{A: hyperparameters} holds and such that there exists an $\upvarepsilon > 0$ satisfying
	\begin{equation}\label{Eq: assumption on stepping-out covering probability}
		\xi_{\mathsf{L}(x,v,t)}\left(\{(\ell, r) \in \mathbb{R}^2\ \mid\ [0, t_{\mathrm{cut}}(x,v)) \cap \mathsf{L}(x,v,t) \subseteq (\ell, r)\}\right)\geq \upvarepsilon
	\end{equation}
	for all $(x,v) \in U\mathsf{W}$, $t \in (0, p(x))$.
	Set 
	\begin{equation}\label{Eq: ergodicity constant}
		\uprho := 
			1 - \frac{\upvarepsilon}{\min\{mw , \uplambda\}} \cdot \frac{1}{\upkappa \upomega_{d-1}} \cdot \frac{\sup_{t \in (0, \infty)} t \nu_{\mathfrak{g}}(\mathsf{L}(t))}{\|p\|_{\infty}}
			\quad \in [0,1),
	\end{equation}
	where the constant $\upkappa \in (0, \infty)$, which is explicitly stated in \eqref{Eq: kappa}, only depends on properties of $\mathsf{M}$ and $\mathsf{W}$.
	Then we have for all $n \in \mathbb{N}$ that
	\begin{equation}\label{Eq: uniform ergodicity statement}
		\sup_{x \in \mathsf{W}} d_{\mathrm{tv}}\big(K^n(x, \cdot), \pi\big) \leq \uprho^n.
	\end{equation}
\end{theorem}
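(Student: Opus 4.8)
The plan is to verify a one-step minorisation (Doeblin) condition: I will show that
\[
	K(x, \mathsf{A}) \;\geq\; (1-\uprho)\,\pi(\mathsf{A}) \qquad \text{for all } x \in \mathsf{W},\ \mathsf{A} \in \mathcal{B}(\mathsf{W}),
\]
with $\uprho$ as in \eqref{Eq: ergodicity constant}. Granting this, \eqref{Eq: uniform ergodicity statement} follows from the classical coupling estimate: for every $x$ one has $d_{\mathrm{tv}}(K^n(x,\cdot),\pi) = d_{\mathrm{tv}}\big(K^n(x,\cdot),\int K^n(y,\cdot)\,\pi(\d y)\big) \leq \int d_{\mathrm{tv}}\big(K^n(x,\cdot),K^n(y,\cdot)\big)\,\pi(\d y) \leq \uprho^n$, the last step being the standard consequence of the minorisation. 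That $\uprho\in[0,1)$ is then automatic, since $K(x,\cdot)$ and $\pi$ are probability measures; one only has to check that the explicit constant in \eqref{Eq: ergodicity constant} is well defined and nonnegative, which is where the finiteness hypotheses on $p$, on $\mathsf{W}$ and on $\uplambda$ enter.

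To obtain the minorisation I bound $K(x,\mathsf{A})$ from below by peeling the integrals in its definition from the inside out. First, the shrinkage kernel stochastically dominates its first, uniformly drawn, proposal: for a geodesic superlevel set $\mathsf{S}=\mathsf{L}(x,v,t)$ and an interval $(\ell,r)\ni 0$ one has $Q^{\ell,r}_{\mathsf{S}}(\d\theta)\geq \tfrac{1}{r-\ell}\,\mathbbm{1}_{\mathsf{S}\cap(\ell,r)}(\theta)\,\mathrm{Leb}_1(\d\theta)$, and the stepping-out procedure always returns an interval whose length is at most of order $\min\{mw,\uplambda\}$; both are among the auxiliary facts about the stepping-out and shrinkage procedures provided in the appendix. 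Second, I discard all of the $\theta$-integral except the part over $[0,t_{\mathrm{cut}}(x,v))$. On the event appearing in \eqref{Eq: assumption on stepping-out covering probability}, which carries $\xi_{\mathsf{L}(x,v,t)}$-mass at least $\upvarepsilon$, the interval $(\ell,r)$ contains all of $[0,t_{\mathrm{cut}}(x,v))\cap\mathsf{L}(x,v,t)$, so the resulting lower bound no longer depends on $(\ell,r)$ and the $\xi$-integration contributes exactly the factor $\upvarepsilon$. This restriction is essential: along $[0,t_{\mathrm{cut}}(x,v))$ the geodesic $\gamma_{(x,v)}$ is minimising, hence injective, which is precisely what makes the change of variables in the next step legitimate, and it is why \eqref{Eq: assumption on stepping-out covering probability} is phrased in terms of the cut time.

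After these reductions $K(x,\mathsf{A})$ is bounded below by $\tfrac{\upvarepsilon}{\min\{mw,\uplambda\}}$ times
\[
	\frac{1}{p(x)}\int_0^{p(x)}\int_{U_x\mathsf{M}}\mathrm{Leb}_1\!\big(\{\theta\in[0,t_{\mathrm{cut}}(x,v))\cap\mathsf{L}(x,v,t) : \gamma_{(x,v)}(\theta)\in\mathsf{A}\}\big)\,\sigma^{(x)}(\d v)\,\d t .
\]
Carrying out the $t$-integral by Fubini turns the condition $\theta\in\mathsf{L}(x,v,t)$, i.e.\ $p(\gamma_{(x,v)}(\theta))>t$, into a factor $\min\{p(x),p(\gamma_{(x,v)}(\theta))\}$. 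Next I pass to geodesic normal (polar) coordinates around $x$: since $\mathrm{Cut}(x)$ is a $\nu_{\mathfrak{g}}$-null set and $\mathrm{Exp}(x,\cdot)$ restricted to $\{sv : v\in U_x\mathsf{M},\, s<t_{\mathrm{cut}}(x,v)\}$ is a diffeomorphism onto $\mathsf{M}\setminus\mathrm{Cut}(x)$, every nonnegative $f$ satisfies $\int_{\mathsf{M}} f\,\d\nu_{\mathfrak{g}} = \int_{U_x\mathsf{M}}\int_0^{t_{\mathrm{cut}}(x,v)} f(\gamma_{(x,v)}(\theta))\,\mathcal{J}_x(\theta,v)\,\d\theta\,\nu_{\iota^\ast\mathfrak{g}^{(x)}}(\d v)$, where $\mathcal{J}_x$ is the volume density (a Jacobi-field determinant) and $\nu_{\iota^\ast\mathfrak{g}^{(x)}}=\upomega_{d-1}\,\sigma^{(x)}$ by \eqref{Eq: Uniform distributions on unit tangent spheres as pushforward measures}. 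The crucial point is that $\mathcal{J}_x(\theta,v)$ admits a finite upper bound $\upkappa$, uniform over $x\in\overline{\mathsf{W}}$, $v\in U_x\mathsf{M}$ and $\theta<t_{\mathrm{cut}}(x,v)$ with $\gamma_{(x,v)}(\theta)\in\overline{\mathsf{W}}$; indeed, by geodesic completeness and Hopf--Rinow any such segment stays in a fixed compact subset of $\mathsf{M}$ and has length at most $\mathrm{diam}(\overline{\mathsf{W}})$, so the curvature along it, hence $\mathcal{J}_x$, is controlled by a comparison estimate, yielding an explicit $\upkappa<\infty$ that depends only on $\mathsf{M}$ and $\mathsf{W}$. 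Dividing by $\upkappa$ converts the flat $\theta$-integral over $U_x\mathsf{M}$ into a lower bound involving $\nu_{\mathfrak{g}}$, so that $K(x,\mathsf{A})\geq \tfrac{\upvarepsilon}{\min\{mw,\uplambda\}\,\upkappa\,\upomega_{d-1}}\cdot\tfrac{1}{p(x)}\int_0^{p(x)}\nu_{\mathfrak{g}}(\mathsf{A}\cap\mathsf{L}(t))\,\d t$. Finally, using $\|p\|_\infty<\infty$ together with the facts that $t\mapsto\nu_{\mathfrak{g}}(\mathsf{A}\cap\mathsf{L}(t))$ is nonincreasing, that $\int_0^\infty\nu_{\mathfrak{g}}(\mathsf{A}\cap\mathsf{L}(t))\,\d t = Z\,\pi(\mathsf{A})$, and that $t\,\nu_{\mathfrak{g}}(\mathsf{L}(t))\leq Z$ for every $t$, one checks that $\tfrac{1}{p(x)}\int_0^{p(x)}\nu_{\mathfrak{g}}(\mathsf{A}\cap\mathsf{L}(t))\,\d t \geq \tfrac{\sup_{t}t\,\nu_{\mathfrak{g}}(\mathsf{L}(t))}{\|p\|_\infty}\,\pi(\mathsf{A})$ uniformly in $x\in\mathsf{W}$; this is where the level-set function of $\pi$ enters and supplies the remaining factors of \eqref{Eq: ergodicity constant}.

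I expect the genuinely substantive step to be the uniform control of the volume density, i.e.\ establishing $\upkappa<\infty$ with the stated dependence only on $\mathsf{M}$ and $\mathsf{W}$: this means combining Hopf--Rinow (to confine the relevant geodesic segments to a compact set, using geodesic completeness together with the relative compactness of $\mathsf{W}$), Jacobi-field and curvature comparison estimates, and the coordinate description of $\nu_{\mathfrak{g}}$ in normal coordinates, while handling the cut locus carefully and keeping in mind that $\mathsf{M}$ itself may be noncompact with a finite injectivity radius. Everything else amounts to bookkeeping on top of the standard slice-sampler minorisation argument and the stepping-out/shrinkage estimates from the appendix.
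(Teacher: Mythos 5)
Your proposal is correct and follows essentially the same route as the paper: a one-step minorisation of $K$ obtained by peeling the integrals from the inside out (shrinkage lower bound, restriction to the covering event in \eqref{Eq: assumption on stepping-out covering probability}, normal/polar coordinates up to the cut time with a Bishop--Gromov-type uniform bound $\upkappa$ on the volume density, layer-cake in $t$), followed by the standard Doeblin argument. Two remarks. First, the pair of ``auxiliary facts'' you quote is not quite what the appendix provides and, taken literally, does not yield the factor $\min\{mw,\uplambda\}$: the bound $Q^{\ell,r}_{\mathsf{S}}\geq \tfrac{1}{r-\ell}\,\mathrm{Leb}_1(\cdot\cap\mathsf{S}\cap(\ell,r))$ is fine, but the stepping-out interval is \emph{not} of length at most (of order) $\uplambda$ when $m=\infty$; it is only bounded by $\mathrm{diam}\big(\mathsf{L}(x,v,t)\big)+2w\leq \uplambda+2w$, so your version would degrade the constant whenever $\uplambda<mw$. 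To get exactly \eqref{Eq: ergodicity constant} one needs the sharper form of Lemma \ref{L: lower bound reeled shrinkage kernel}, with $\min\{r-\ell,\mathrm{diam}(\mathsf{S})\}$ in the denominator, combined with $r-\ell\leq mw$ almost surely and $\mathrm{diam}\big(\mathsf{L}(x,v,t)\big)\leq\uplambda$ --- which is exactly how the paper argues; since you defer to the appendix anyway, this is a misquotation rather than a fatal gap, but the distinction matters for the stated constant. Second, your final step deviates mildly from the paper: you minorise directly by a multiple of $\pi$, using monotonicity of $t\mapsto\nu_{\mathfrak{g}}(\mathsf{A}\cap\mathsf{L}(t))$, the identity $\int_0^\infty\nu_{\mathfrak{g}}(\mathsf{A}\cap\mathsf{L}(t))\,\d t=Z\pi(\mathsf{A})$ and $\sup_t t\,\nu_{\mathfrak{g}}(\mathsf{L}(t))\leq Z$; this is correct (it even passes through the slightly better constant $Z/\|p\|_\infty$ before weakening to match \eqref{Eq: ergodicity constant}), whereas the paper minorises by $\nu_{\mathfrak{g}}\vert_{\mathsf{L}(s)}$ and optimises over the level $s$ --- both give the same $\uprho$. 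Your plan for $\upkappa$ (confine the full geodesic segments up to the cut time in a compact set, take a uniform Ricci lower bound there, apply volume comparison in normal coordinates) matches the paper's construction; when fleshing it out in the positively curved case you must additionally verify, as the paper does, that the relevant radii stay below $\uppi/\sqrt{\upzeta}$ so that the comparison bound applies.
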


\begin{proof}
	To establish uniform ergodicity, we show that the whole state space $\mathsf{W}$ is small for $K$.
	To this end set
	\[
		\mathsf{R}_{x,v,t}:= \{(\ell, r) \in \mathbb{R}^2\ \mid\ [0, t_{\mathrm{cut}}(x,v)) \cap \mathsf{L}(x,v,t) \subseteq (\ell, r)\}
	\]
	for $ (x,v) \in U\mathsf{W}$ and $t \in (0, p(x))$. Fix $\mathsf{A} \in \mathcal{B}(\mathsf{W})$.
	We establish the necessary lower bound working from the inside out.
	By construction we have for random variables $(\boldsymbol{L}_{\mathsf{L}(x,v,t)},\boldsymbol{R}_{\mathsf{L}(x,v,t)})\sim \xi_{\mathsf{L}(x,v,t)}$ that
	\[
		\boldsymbol{R}_{\mathsf{L}(x,v,t)} - \boldsymbol{L}_{\mathsf{L}(x,v,t)} = \left( \mathfrak{T}_{\mathsf{L}(x,v,t)}+ \tau_{\mathsf{L}(x,v,t)} - 1\right) w
		\leq mw, \qquad \mathbb{P}\text{-almost surely},
	\]
	where the stopping times $\tau_{\mathsf{L}(x,v,t)}$ and $\mathfrak{T}_{\mathsf{L}(x,v,t)}$ are defined in \eqref{Eq: stepping-out stopping times}, and the right hand side may be infinity if $m = \infty$.
	Combining this with \eqref{Eq: assumption on stepping-out covering probability} and Lemma \ref{L: lower bound reeled shrinkage kernel}, we get for all $(x,v) \in U\mathsf{W}$ and $t \in (0, p(x))$ that
	\begin{align*}
		&\int_{\mathbb{R}^2} \int_{\mathsf{L}(x,v,t) \cap (\ell,r)} \mathbbm{1}_{\mathsf{A}}\big(\gamma_{(x,v)}(\theta)\big)\ Q^{\ell,r}_{\mathsf{L}(x,v,t)}(\d \theta)\, \xi_{\mathsf{L}(x,v,t)}(\d (\ell,r))\\
		&\qquad\geq \begin{multlined}[t]
			 \int_{\mathbb{R}^2} \frac{1}{\min\{r - \ell,\mathrm{diam}(\mathsf{L}(x,v,t))\}} \int_{\mathsf{L}(x,v,t) \cap (\ell,r)} \mathbbm{1}_{\mathsf{A}}\big(\gamma_{(x,v)}(\theta)\big)\\
			 \times \mathrm{Leb}_{1}(\d \theta) \, \xi_{\mathsf{L}(x,v,t)}(\d (\ell,r))
		\end{multlined}\\
		& \qquad \geq \int_{\mathsf{R}_{x,v,t}} \frac{1}{\min\{mw,\uplambda\}} \int_{\mathsf{L}(x,v,t) \cap (\ell,r)} \mathbbm{1}_{\mathsf{A}}\big(\gamma_{(x,v)}(\theta)\big)\ \mathrm{Leb}_{1}(\d \theta) \, \xi_{\mathsf{L}(x,v,t)}(\d (\ell,r))\\
		& \qquad \geq \begin{multlined}[t]
			\int_{\mathsf{R}_{x,v,t}} \frac{1}{\min\{mw,\uplambda\}} \int_{\mathsf{L}(x,v,t) \cap [0, t_{\mathrm{cut}}(x,v))} \mathbbm{1}_{\mathsf{A}}\big(\gamma_{(x,v)}(\theta)\big)\\
		\times \mathrm{Leb}_{1}(\d \theta) \, \xi_{\mathsf{L}(x,v,t)}(\d (\ell,r))
		\end{multlined}\\
		&\qquad = \frac{\xi_{\mathsf{L}(x,v,t)}(\mathsf{R}_{x,v,t})}{\min\{mw,\uplambda\}} \int_{\mathsf{L}(x,v,t) \cap [0, t_{\mathrm{cut}}(x,v))} \mathbbm{1}_{\mathsf{A}}\big(\gamma_{(x,v)}(\theta)\big)\ \mathrm{Leb}_{1}(\d \theta) \\
		& \qquad \geq \frac{\upvarepsilon}{\min\{mw,\uplambda\}} \int_{\mathsf{L}(x,v,t) \cap [0, t_{\mathrm{cut}}(x,v))} \mathbbm{1}_{\mathsf{A}}\big(\gamma_{(x,v)}(\theta)\big)\ \mathrm{Leb}_{1}(\d \theta).
	\end{align*}
	Next, we establish a suitable lower bound on the Ricci curvature to handle the integral over the uniform distribution on $U_x\mathsf{M}$.
	To this end, set
	\[
		\mathsf{W}_c := \bigcup_{(x,v) \in U\mathsf{W}} \big\{\mathrm{Exp}_x(\alpha v) \mid \alpha \in \mathrm{conv} \big(\mathsf{W}(x,v) \cap [0, t_{\mathrm{cut}}(x,v))\big)\big\}.
	\]
	Since $\mathsf{W}_c \subseteq B_{2 \mathrm{diam}(\mathsf{W})}(x_0)$ for some $x_0 \in \mathsf{W}$,
	we know that $\overline{\mathsf{W}}_c$ is compact as a closed subset of the compact set $\overline{B_{2 \mathrm{diam}(\mathsf{W})}(x_0)}$, see \cite[Theorem III.1.1]{sakai1996riemannian}.
	By virtue of \cite[Lemma 4.84]{lee2011introduction} and  \cite[Proposition 2.8]{lee2018introduction}, 
	this implies due to the paracompactness of manifolds (see \cite[Lemma 4.77]{lee2011introduction}) that also the set
	\[
		U\mathsf{W}_c:= \{(x,v) \in U\mathsf{M} \mid x \in \overline{\mathsf{W}}_c\}
	\]
	is compact.
	As $\mathrm{Ric}$ is a continuous function on the unit tangent bundle, see \cite[p.\ 45]{sakai1996riemannian},
	therefore there  exists a constant $ \upzeta \in \mathbb{R}$ such that 
	\[
		\min_{(x,v)\in U\mathsf{W}_c}\mathrm{Ric}(x,v) \geq (d-1)\upzeta.
	\]
	Set
	\[
		s_\upzeta^{d-1}(\alpha) := \begin{dcases}
									\upzeta^{(1-d)/2} \sin^{d-1}(\sqrt{\upzeta} \alpha ) , & \upzeta > 0,\\
									\alpha^{d-1}, & \upzeta = 0,\\
									 |\upzeta|^{(1-d)/2} \sinh^{d-1}(\sqrt{|\upzeta|}\alpha), & \upzeta < 0,
		\end{dcases}
		\qquad\alpha \in \mathbb{R}.
	\]
	Due to the lower bound on the Ricci curvature, the Bishop-Gromov volume comparison theorem tells us that for all $(x,v) \in U\mathsf{W}$ holds
	\begin{equation}\label{Eq: Bishop-Gromov volume comparison}
		s_\upzeta^{d-1}(\alpha) \geq \alpha^{d-1} \sqrt{\det(\mathfrak{g}, \varphi_x)}\big(\mathrm{Exp}_x(\alpha v)\big), \quad \alpha \in \mathrm{conv}\big([0, t_{\mathrm{cut}}(x,v)) \cap \mathsf{W}(x,v)\big),
	\end{equation}
	where $(\mathsf{M} \setminus \mathrm{Cut}(x), \varphi_x)$ denotes a normal coordinate neighbourhood for each $x \in \mathsf{M}$, see \cite[Theorem IV.3.1(2)]{sakai1996riemannian}.
	Observe that for the case $\upzeta > 0$ a priori the above inequality only holds for $\alpha < \uppi/\sqrt{\upzeta}$.
	However, if there would exist $(x,v) \in U\mathsf{W}$ such that $\uppi/\sqrt{\upzeta} \in \mathrm{conv}([0, t_{\mathrm{cut}}(x,v)) \cap \mathsf{W}(x,v))$ then continuity  would imply
	\begin{align*}
		0 &\leq \left(\frac{\uppi}{\sqrt{\upzeta}}\right)^{d-1} \sqrt{\det(\mathfrak{g}, \varphi_x)}\left (\mathrm{Exp}_x\left (\frac{\uppi}{\sqrt{\upzeta}} v\right )\right )
		\leq s_\upzeta^{d-1}\left(\frac{\uppi}{\sqrt{\upzeta}}\right) 
		= 0,
	\end{align*}
	as $\uppi/\sqrt{\upzeta}$ can be approximated from below by elements in $\mathrm{conv}([0, t_{\mathrm{cut}}(x,v)) \cap \mathsf{W}(x,v))$.
	Hence, we have $\sqrt{\det(\mathfrak{g}, \varphi_x)}(y)= 0$ for $y= \mathrm{Exp}_x(\uppi/ \sqrt{\upzeta} v) \in \mathsf{M} \setminus \mathrm{Cut}(x)$.
	This contradicts the fact that $\sqrt{\det(\mathfrak{g}, \varphi_x)}$ is strictly positive on $\mathsf{M} \setminus \mathrm{Cut}(x)$.
	Therefore if $\upzeta > 0$, for all $(x,v) \in U\mathsf{W}$ holds  $\mathrm{conv}([0, t_{\mathrm{cut}}(x,v)) \cap \mathsf{W}(x,v)) \subseteq [0,\uppi/\sqrt{\upzeta})$.
	In particular, with
	\begin{equation}\label{Eq: kappa}
		\upkappa := \begin{dcases}
					 \upzeta^{(1-d)/2} \sin^{d-1}\left ((\sqrt{\upzeta} \mathrm{diam}(\mathsf{W})) \wedge \frac{\uppi}{2} \right ), & \upzeta > 0,\\
					\mathrm{diam}(\mathsf{W})^{d-1}, & \upzeta = 0,\\
					|\upzeta|^{(1-d)/2} \sinh^{d-1}\left (\sqrt{|\upzeta|}\mathrm{diam}(\mathsf{W})\right ) , & \upzeta < 0,
			\end{dcases}
	\end{equation}
	we get from \eqref{Eq: Bishop-Gromov volume comparison} that
	\[
		\alpha^{d-1} \sqrt{\det(\mathfrak{g}, \varphi_x)}\big(\mathrm{Exp}_x(\alpha v)\big) \leq s_\upzeta^{d-1}(\alpha) \leq \upkappa, \qquad \alpha \in [0, t_{\mathrm{cut}}(x,v)) \cap \mathsf{W}(x,v).
	\]
	In combination with \eqref{Eq: Uniform distributions on unit tangent spheres as pushforward measures} and a transition from polar to Cartesian coordinates this yields for all $x \in \mathsf{W}$ and all $t \in (0, \infty)$ that 
	\begin{align*}
		&\int_{U_x\mathsf{M}}\int_{\mathsf{L}(x,v,t) \cap [0, t_{\mathrm{cut}}(x,v))} \mathbbm{1}_{\mathsf{A}}\big(\gamma_{(x,v)}(\theta)\big)\ \mathrm{Leb}_{1}(\d \theta)\, \sigma^{(x)}(\d v)\\
		&\qquad\geq \begin{multlined}[t]
			\frac{1}{\upkappa} \int_{U_x\mathsf{M}}\int_{\mathsf{L}(x,v,t) \cap [0, t_{\mathrm{cut}}(x,v))} \theta^{d-1} \sqrt{\det(\mathfrak{g}, \varphi_x)}\big(\mathrm{Exp}_x(\theta v)\big) \mathbbm{1}_{\mathsf{A}}\big(\gamma_{(x,v)}(\theta)\big)\\
			\times \mathrm{Leb}_{1}(\d \theta)\, \sigma^{(x)}(\d v)
		\end{multlined}\\
		&\qquad = \frac{1}{\upkappa \upomega_{d-1}} \int_{\varphi_x(\mathsf{L}(t) \cap \mathsf{A}\setminus \mathrm{Cut}(x))}  \sqrt{\det(\mathfrak{g}, \varphi_x)} \left(\varphi_x^{-1}(z)\right) \ \mathrm{Leb}_{d}(\d z)\\
		& \qquad = \frac{1}{\upkappa \upomega_{d-1}} \nu_{\mathfrak{g}}\big(\mathsf{L}(t) \cap \mathsf{A} \setminus \mathrm{Cut}(x)\big)
		= \frac{1}{\upkappa \upomega_{d-1}} \nu_{\mathfrak{g}}\big(\mathsf{L}(t) \cap \mathsf{A} \big),
	\end{align*}
	where the last equality holds because $\mathrm{Cut}(x)$ is a $\nu_{\mathfrak{g}}$-null set, see \cite[Theorem 10.34]{lee2018introduction}.
	Finally, observe that for all $x \in \mathsf{W}$ and all $s \in (0, \|p\|_{\infty})$ holds
	\begin{align*}
		&\frac{1}{p(x)} \int_{(0, p(x))} \nu_{\mathfrak{g}}\big(\mathsf{L}(t) \cap \mathsf{A} \big)\ \mathrm{Leb}_{1}(\d t)
		=\int_{\mathsf{A}} \frac{1}{p(x)} \int_{(0, p(x)) } \mathbbm{1}_{\mathsf{L}(t)}(y) \ \mathrm{Leb}_{1}(\d t)\, \nu_{\mathfrak{g}}(\d y)\\
		&\qquad= \int_{\mathsf{A}} \frac{p(x) \wedge p(y)}{p(x)} \ \nu_{\mathfrak{g}}(\d y)
		\geq \int_{\mathsf{A} \cap \mathsf{L}(s)}  \frac{p(x) \wedge p(y)}{p(x)} \ \nu_{\mathfrak{g}}(\d y) \\
		&\qquad\geq \frac{s}{\|p\|_{\infty}} \nu_{\mathfrak{g}}\big(\mathsf{A} \cap \mathsf{L}(s)\big).
	\end{align*}
	Overall we get for all $x \in \mathsf{W}$ and all $s \in (0, \|p\|_{\infty})$ that
	\begin{align*}
		&K(x, \mathsf{A}) 
		\geq 	\begin{multlined}[t]
			\frac{1}{p(x)} \int_{(0, p(x))} \frac{\upvarepsilon}{\min\{mw,\uplambda\}}\int_{U_x\mathsf{M}}  \int_{\mathsf{L}(x,v,t) \cap [0, t_{\mathrm{cut}}(x,v))} \mathbbm{1}_{\mathsf{A}}\big(\gamma_{(x,v)}(\theta)\big)\\
			\times \mathrm{Leb}_{1}(\d \theta)\, \sigma^{(x)}(\d v)\, \mathrm{Leb}_{1}(\d t)
		\end{multlined}\\
		&\qquad\geq  \frac{\upvarepsilon}{\min\{mw,\uplambda\}}\frac{1}{\upkappa \upomega_{d-1}}	\frac{1}{p(x)} \int_{(0, p(x))} \nu_{\mathfrak{g}}\big(\mathsf{L}(t) \cap \mathsf{A} \big)\ \mathrm{Leb}_{1}(\d t)\\
		&\qquad\geq  \frac{\upvarepsilon}{\min\{mw,\uplambda\}}\frac{1}{\upkappa \upomega_{d-1}}  \frac{s}{\|p\|_{\infty}} \nu_{\mathfrak{g}}\big(\mathsf{A} \cap \mathsf{L}(s)\big),
	\end{align*}
	i.e.\ that $\mathsf{W}$ is $\upvarepsilon s(\min\{mw,\uplambda\} \upkappa \upomega_{d-1} \|p\|_{\infty})^{-1} \nu_{\mathfrak{g}}\vert_{\mathsf{L}(s)}$-small.
	Consequently, by \cite[Theorem 16.2.4]{meyn2009markov} for all $n \in \mathbb{N}$ holds
	\[
		\sup_{x \in \mathsf{W}} d_{\mathrm{tv}}\big(K^n(x, \cdot), \pi\big) \leq \left(1 -  \frac{\upvarepsilon}{\min\{mw,\uplambda\}}\frac{1}{\upkappa \upomega_{d-1}}  \frac{s \nu_{\mathfrak{g}}(\mathsf{L}(s))}{\|p\|_{\infty}}\right)^n.
	\]
 	Taking the infimum over $s$ yields the desired statement.
\end{proof} 
It is also possible to replace \eqref{Eq: assumption on stepping-out covering probability} by a direct condition on the hyperparameters at the expense of a possibly worse estimate of the convergence speed.
To this end define
\[
	\Updelta := \sup_{(x,v) \in U\mathsf{\mathsf{W}}, t \in (0, p(x))} \mathrm{Leb}_{1}\Big(\mathrm{conv}\big([0, t_{\mathrm{cut}}(x,v)) \cap \mathsf{L}(x,v,t) \big)\setminus \mathsf{L}(x,v,t)\Big).
\]
This constant is smaller than the diameter of $\mathsf{W}$ and may be understood as the size of the ``largest'' interruption of the geodesic level sets up to the corresponding cut time.
\begin{corollary}\label{C: uniform ergodicity}
	In the setting of Theorem \ref{Thm: uniform ergodicity} assume that instead of \eqref{Eq: assumption on stepping-out covering probability} we have 
	$\mathrm{diam}(\mathsf{W})m^{-1}\mathbbm{1}_{(0, \infty)}(m)< w- \Updelta \mathbbm{1}_{[2, \infty]}(m)$.
	Then \eqref{Eq: uniform ergodicity statement} holds for $\uprho$ given as in \eqref{Eq: ergodicity constant} with 
	\[
		\upvarepsilon = 1 - \frac{\mathrm{diam}(\mathsf{W})}{mw} \mathbbm{1}_{(0, \infty)}(m) - \frac{\Updelta}{w}\mathbbm{1}_{[2, \infty]}(m).
	\]
\end{corollary}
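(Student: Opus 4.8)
The plan is to deduce the corollary from Theorem~\ref{Thm: uniform ergodicity}: since the conclusion of the theorem is stated for the $\uprho$ built from $\upvarepsilon$ via~\eqref{Eq: ergodicity constant}, it suffices to verify hypothesis~\eqref{Eq: assumption on stepping-out covering probability} with $\upvarepsilon$ equal to the quantity displayed in the corollary, and one checks along the way that the hyperparameter condition $\mathrm{diam}(\mathsf{W})m^{-1}\mathbbm{1}_{(0,\infty)}(m) < w - \Updelta\mathbbm{1}_{[2,\infty]}(m)$ is in fact equivalent to $\upvarepsilon > 0$. So I would fix $(x,v) \in U\mathsf{W}$ and $t \in (0,p(x))$, abbreviate $\mathsf{L} := \mathsf{L}(x,v,t)$, $T := t_{\mathrm{cut}}(x,v)$ and $\mathsf{S} := [0,T) \cap \mathsf{L}$, set $c := \sup \mathsf{S}$, and aim at the bound $\xi_{\mathsf{L}}(\mathsf{R}_{x,v,t}) \ge \upvarepsilon$, with $\mathsf{R}_{x,v,t} = \{(\ell,r) \in \mathbb{R}^2 \mid \mathsf{S} \subseteq (\ell,r)\}$ as in the proof of Theorem~\ref{Thm: uniform ergodicity}.

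First I would record two elementary reductions. Since $t < p(x)$ we have $0 \in \mathsf{S}$, and for $s \in \mathsf{S}$ the geodesic $\gamma_{(x,v)}$ is minimizing on $[0,s]$ while $\gamma_{(x,v)}(s) \in \mathsf{L}(t) \subseteq \mathsf{W}$, so $s = \mathrm{dist}\big(x,\gamma_{(x,v)}(s)\big) \le \mathrm{diam}(\mathsf{W})$; hence $c \le \mathrm{diam}(\mathsf{W}) < \infty$, the interval $\mathrm{conv}(\mathsf{S})$ has left endpoint $0$, and $\mathrm{Leb}_1\big(\mathrm{conv}(\mathsf{S}) \setminus \mathsf{L}\big) \le \Updelta$ by definition of $\Updelta$. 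Moreover $\mathsf{L}$ is open (the preimage of $\{p > t\}$ under $\gamma_{(x,v)}$), so $0 < c \notin \mathsf{S}$, and consequently an interval $(\ell,r)$ contains $\mathsf{S}$ whenever $\ell < 0$ and $r \ge c$. Thus $\xi_{\mathsf{L}}(\mathsf{R}_{x,v,t}) \ge \mathbb{P}(\boldsymbol{L} < 0,\, \boldsymbol{R} \ge c)$ for $(\boldsymbol{L},\boldsymbol{R}) \sim \xi_{\mathsf{L}}$.

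Next I would insert the explicit description of the stepping-out output from Appendix~\ref{S: stepping-out distribution}: one has $\boldsymbol{L} = -w\boldsymbol{U}$ with $\boldsymbol{U} \sim \mathrm{Unif}(0,1)$, so $\boldsymbol{L} < 0$ almost surely and the left endpoint never obstructs coverage; with $\boldsymbol{R}_0 := w(1-\boldsymbol{U}) \sim \mathrm{Unif}(0,w)$ the right endpoint of the initial interval and $\boldsymbol{K}$ the number of right-expansion steps allotted — uniform on $\{0,\dots,m-1\}$ and independent of $\boldsymbol{R}_0$ when $m<\infty$, and unbounded when $m=\infty$ — the final right endpoint is $\boldsymbol{R} = \boldsymbol{R}_0 + \min(\boldsymbol{K},\rho)\,w$, where $\rho := \min\{j \ge 0 : \boldsymbol{R}_0 + jw \notin \mathsf{L}\}$. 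Then $\{\boldsymbol{R} < c\} \subseteq \{\boldsymbol{K} w < c - \boldsymbol{R}_0\} \cup \{\rho w < c - \boldsymbol{R}_0\}$, and I would bound the two pieces separately. For the first, conditioning on $\boldsymbol{K}$ and using that $\boldsymbol{R}_0$ is uniform on $(0,w)$ gives $\mathbb{P}(\boldsymbol{R}_0 < c - \boldsymbol{K} w \mid \boldsymbol{K}) = w^{-1}\,\mathrm{Leb}_1\big([0,c] \cap [\boldsymbol{K} w,(\boldsymbol{K}+1)w)\big)$; averaging over the uniform $\boldsymbol{K}$ and using the telescoping identity $\sum_{k\ge 0}\mathrm{Leb}_1\big([0,c]\cap[kw,(k+1)w)\big) = c$ yields at most $c/(mw) \le \mathrm{diam}(\mathsf{W})/(mw)$, a term that is absent when $m = \infty$. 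For the second, $\{\rho w < c - \boldsymbol{R}_0\}$ is contained in the event that $\boldsymbol{R}_0 + jw \in \mathrm{conv}(\mathsf{S}) \setminus \mathsf{L}$ for some $j \ge 0$, whose probability is at most $w^{-1}\sum_{j\ge 0}\mathrm{Leb}_1\big((\mathrm{conv}(\mathsf{S})\setminus\mathsf{L}) \cap (jw,(j+1)w)\big) = w^{-1}\mathrm{Leb}_1\big(\mathrm{conv}(\mathsf{S})\setminus\mathsf{L}\big) \le \Updelta/w$, and this term only enters when a right step is available, i.e. when $m \ge 2$. Combining the two estimates gives $\xi_{\mathsf{L}}(\mathsf{R}_{x,v,t}) \ge \mathbb{P}(\boldsymbol{R} \ge c) \ge 1 - \mathrm{diam}(\mathsf{W})(mw)^{-1}\mathbbm{1}_{(0,\infty)}(m) - \Updelta w^{-1}\mathbbm{1}_{[2,\infty]}(m) = \upvarepsilon$, uniformly in $(x,v) \in U\mathsf{W}$ and $t \in (0,p(x))$, so Theorem~\ref{Thm: uniform ergodicity} applies and yields~\eqref{Eq: uniform ergodicity statement}.

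I expect the main obstacle to be bookkeeping rather than conceptual: extracting the joint law of $(\boldsymbol{L},\boldsymbol{R})$ from the stepping-out construction in the appendix — in particular that each value of the right-expansion budget carries mass at most $1/m$ and that the budget is independent of $\boldsymbol{R}_0$ — and keeping the edge cases straight, namely $m = 1$ (no expansion, only the $\mathrm{diam}(\mathsf{W})/w$ term survives, and the hypothesis forces $\mathrm{diam}(\mathsf{W}) < w$, hence $c < w$, so the first estimate is valid) and $m = \infty$ (where $\mathbbm{1}_{(0,\infty)}(m) = 0$ and the budget constraint disappears). Everything else reduces to the two one-dimensional covering estimates above, which use only that $\boldsymbol{R}_0$ is uniform on an interval of length $w$.
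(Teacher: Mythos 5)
Your proof is correct and follows essentially the same route as the paper: the two union-bound covering estimates you derive (the $c/(mw)$ bound from the uniform right-expansion budget and the $\Updelta/w$ bound from grid points landing in $\mathrm{conv}(\mathsf{S})\setminus\mathsf{L}$, with the $m=1$ and $m=\infty$ cases dropping one term each) are exactly the content of Lemma~\ref{L: lower bound on stepping-out covering probability}, which the paper then applies with $\mathsf{S}=\mathsf{L}(x,v,t)$, $\theta=0$, $C=t_{\mathrm{cut}}(x,v)$ and the bounds $\sup\mathsf{S}\cap[0,C)\leq\mathrm{diam}(\mathsf{W})$, $\updelta\leq\Updelta$. (One immaterial slip: $\boldsymbol{L}$ is not $-w\boldsymbol{U}$ in general, since the left endpoint may be expanded further, but your argument only uses $\boldsymbol{L}<0$, which does hold by construction.)
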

\begin{proof}
	For all $(x,v) \in U\mathsf{W}$, $t \in (0, p(x))$ we apply Lemma \ref{L: lower bound on stepping-out covering probability} with $\mathsf{S} = \mathsf{L}(x,v,t)$, $\theta = 0$ and $C= t_{\mathrm{cut}}(x,v)$.
	Taking the infimum yields that \eqref{Eq: assumption on stepping-out covering probability} holds for $\upvarepsilon = 1 - \mathrm{diam}(\mathsf{W})(mw)^{-1} \mathbbm{1}_{(0, \infty)}(m) - \Updelta w^{-1}\mathbbm{1}_{[2, \infty]}(m)$.
\end{proof}
A closer inspection of the constant $\uprho$ in Theorem \ref{Thm: uniform ergodicity} reveals that it depends on three different factors, which are the underlying state space $\mathsf{W}$ together with the ambient Riemannian manifold $\mathsf{M}$, the target distribution $\pi$, and the hyperparameters $m$ and $w$.
In the following, we analyse in more detail how these three factors influence $\uprho$.
\begin{figure}[hbt]
	\centering
	\begin{subfigure}{.47\textwidth}
		\flushleft
		\includegraphics[height=0.17\textheight]{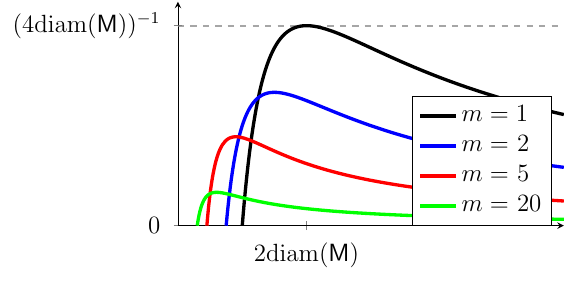}
		\caption{$\uplambda = \infty$.}
	\end{subfigure}
	\hfill
	\begin{subfigure}{.47\textwidth}
		\flushright
		\includegraphics[height=0.17\textheight]{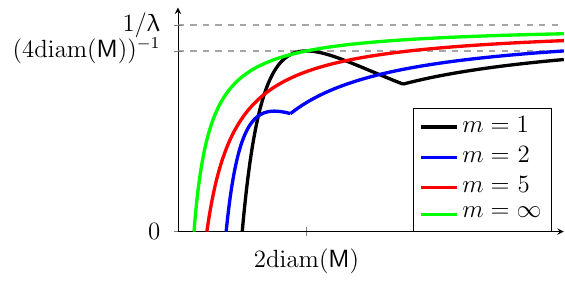}
		\caption{$2 \mathrm{diam}(\mathsf{M}) < \uplambda \leq 4 \mathrm{diam}(\mathsf{M})$.}
	\end{subfigure}
	
	\vspace*{1em}
	\begin{subfigure}{.47\textwidth}
		\flushleft
		\includegraphics[height=0.17\textheight]{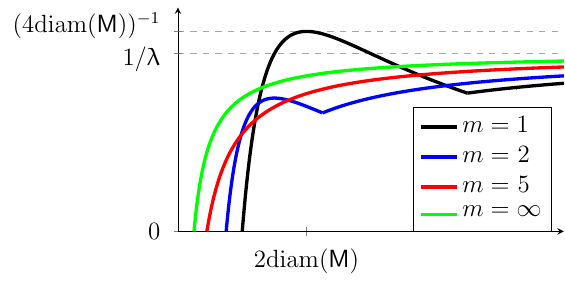}
		\caption{4 $\mathrm{diam}(\mathsf{M}) < \uplambda$.}
	\end{subfigure}
	\hfill
	\begin{subfigure}{.47\textwidth}
		\flushright
		\includegraphics[height=0.17\textheight]{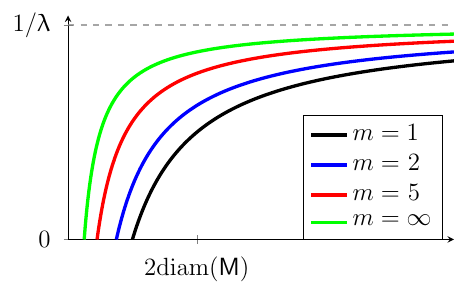}
		\caption{$\uplambda \leq 2 \mathrm{diam}(\mathsf{M})$.}
	\end{subfigure}
	\caption{Illustration of the different regimes for $q$ defined in \eqref{Eq: Ergodicity constant term depending on hyperparameters}. The horizontal axis shows the hyperparameter $w$, the vertical axis shows $q(m, \cdot)$, where different values of the hyperparameter $m$ correspond to different coloured graphs. All plots are drawn for $\Updelta > 0$. Note that $y$-axis scaling is not comparable between plots.}\label{F: sketches of q}
\end{figure}
\begin{remark}[Hyperparameters]
	The only part of $\uprho$ that depends on the hyperparameters $m$ and $w$ is the factor $\upvarepsilon/ \min\{mw, \uplambda\}$ in the subtrahend.
	Since the influence of $m$ and $w$ on the possible values of $\upvarepsilon$ is not straightforward,
	we restrict our analysis to the setting of Corollary \ref{C: uniform ergodicity} and work with the lower bound on $\upvarepsilon$ provided there,
	that is, 
	we investigate for
	\begin{equation}\label{Eq: Ergodicity constant term depending on hyperparameters}
		\begin{multlined}[0.9\textwidth]
			q(m,w) := \frac{\left(1 - \frac{\mathrm{diam}(\mathsf{W})}{mw} \mathbbm{1}_{(0, \infty)}(m) - \frac{\Updelta}{w}\mathbbm{1}_{[2, \infty]}(m)\right)}{\min\{mw, \uplambda\}},\\ m \in \mathbb{N} \cup \{\infty\}, w \in (0, \infty),
		\end{multlined}
	\end{equation}
	the question of an optimal choice of the hyperparameters with respect to the convergence guarantee provided by Corollary \ref{C: uniform ergodicity},
	i.e.\ for a choice that maximises $q$.
	At this point, let us emphasize that any analysis of $q$ is only an analysis of an upper bound for the convergence of $\sup_{x \in \mathsf{W}} d_{\mathrm{tv}}(K^n(x, \cdot), \pi)$, not of the true convergence behaviour itself. 
	Moreover, we completely omit cost per Markov chain iteration in our subsequent considerations.  
	
	Roughly speaking, the optimal choice depends on the value of $\uplambda$.
	If $\uplambda$ is infinite,
	then $m = 1$ and $w = 2 \mathrm{diam}(\mathsf{M})$ gives the fastest convergence guarantee in Corollary \ref{C: uniform ergodicity}.
	On the other hand, if $\uplambda < \infty$, then, except for some (pathological) cases, increasing the value of $w$ improves the convergence guarantee for every fixed value of $m$.
	A noteworthy exception is here the case if also $\Updelta = 0$.
	In this case the convergence guarantee is the fastest if $m = \infty$ and the value of $w\in (0, \infty)$ has no effect.
	
	We discuss the two different settings in more detail. 
	If $\uplambda = \infty$, then we may only choose $m \in \mathbb{N}$.
	An optimisation over $w$ for fixed values of $m$, 
	yields that for all $m \in \mathbb{N}$ the function $q(m, \cdot)$ attains its global maximum at $w_m^\ast = 2\mathrm{diam}(\mathsf{W})m^{-1} + 2\Updelta\mathbbm{1}_{[2, \infty)}(m)$.
	Hence, we get 
	\[
		\max_{w\in (0, \infty), m \in \mathbb{N}} q(w,m) = \frac{1}{4 \mathrm{diam}(\mathsf{W})},
	\]
	which is attained for $m=1$ and $w = 2 \mathrm{diam}(\mathsf{W})$ if $\Updelta > 0$, and otherwise for $mw = 2 \mathrm{diam}(\mathsf{W})$.
	
	We now turn to the case $\uplambda <  \infty$.
	First of all note that $\lim_{w \to \infty} q(m,w) = \uplambda^{-1}$ for all $m \in \mathbb{N} \cup \{\infty\}$.
	Moreover, $q(\infty, \cdot)$ is everywhere increasing, and for  $m \in \mathbb{N}$ the function $q(m, \cdot)$ is increasing as soon as $w \geq \uplambda/m$ or $w \leq w_m^\ast$.
	If $\Updelta > 0$ and $\uplambda$ is big compared to the diameter of $\mathsf{W}$, more precisely, if $\uplambda > 2 \mathrm{diam}(\mathsf{W})$, then for 
	small $m \in \mathbb{N}$
	the function $q(m, \cdot)$ has a local maximum at $w_m^\ast$ and a local minimum at $w= \uplambda/m$.
	However, only if $\uplambda > 4 \mathrm{diam}(\mathsf{W})$, the global maximal point of $q$ lies at $m=1$ and $w = 2\mathrm{diam}(\mathsf{W})$, taking again the value $(4 \mathrm{diam}(\mathsf{W}))^{-1}$.
	If $\Updelta = 0$, then we have almost the same picture, except that for $\uplambda > 2 \mathrm{diam}(\mathsf{W})$ the function $q(m, \cdot)$ has a local maximum at $w_m^\ast$ for all $m \in \mathbb{N}$,
	and if $\uplambda > 4 \mathrm{diam}(\mathsf{W})$ the set of maximal points consists of all $m \in \mathbb{N}$ and $ w \in (0, \infty)$ with $mw = 2 \mathrm{diam}(\mathsf{W})$.
	We turn to the last case of $\uplambda \leq 2 \mathrm{diam}(\mathsf{W})$.
	Then $q(m, \cdot)$ is always increasing in $w$ for any fixed $m \in \mathbb{N}\cup \{\infty\}$.
	More precisely, if $\Updelta > 0$, this increase is strict.
	However, if $\Updelta = 0$, then the maximum $\uplambda^{-1}$ of $q$ is attained for $m= \infty$ and any value for $w$.
	See Figure \ref{F: sketches of q} for an illustration of these four different scenarios.
	
	One additional thing to observe from  Figure \ref{F: sketches of q} is that as long as $w$ is chosen ``large enough'' $q$ is not too sensitive to the exact choice of $w$.
\end{remark}
\begin{remark}[Target distribution]\label{R: Dependence of ergodicity constant on target distirbution}
	The unnormalised target density $p$ enters the constant $\uprho$ in two different places.
	Firstly, it interacts with the hyperparameters $m$ and $w$ affecting for which values of $\upvarepsilon$ condition \eqref{Eq: assumption on stepping-out covering probability} holds.
	However, at least in the setting of Corollary \ref{C: uniform ergodicity}, the influence at this point may be regarded as limited in the sense that,
	 with the estimate $\Updelta \leq \mathrm{diam}(\mathsf{W})$,
	the unnormalised target density $p$ may be completely eliminated from the first factor of the subtrahend in $\uprho$.
	The other place where $p$ enters $\uprho$ is the factor $\sup_{t \in (0, \infty)} t \nu_{\mathfrak{g}}(\mathsf{L}(t))/\|p\|_\infty^{-1}$.
	Due to the lower semi-continuity of $p$ we may rewrite this term completely in terms of the level set function
	\[
		\mathcal{L}: (0, \infty) \to [0, \infty), \qquad t \mapsto \nu_{\mathfrak{g}}\big(\mathsf{L}(t)\big).
	\]
	Namely, we have
	\[
		\frac{\sup_{t \in (0, \infty)} t \nu_{\mathfrak{g}}(\mathsf{L}(t))}{\|p\|_\infty} = \frac{\sup_{t \in (0, \infty)} t \mathcal{L}(t)}{\sup \{t \in (0, \infty) \mid \mathcal{L}(t) > 0\}},
	\]
	such that this factor does not directly depend on the unnormalised target density $p$, but rather only on its level set function $\mathcal{L}$.
	In the literature,  the level set function appears at several points in the role of controlling the convergence behaviour of (ideal) slice samplers, see e.g.\ \citep{roberts1999convergence,mira2002efficiency, natarovskii2021quantitative,schaer2023wasserstein, rudolf2024dimension}. 
	In particular, it is known that the spectral gap of two ideal slice samplers coincides if they target distributions with the same level set function \citep[Theorem 3.8]{schaer2024slice}.
\end{remark}
\begin{remark}[State space and ambient Riemannian manifold]
	It remains to analyse the second factor in the subtrahend of $\uprho$, which only depends on the state space $\mathsf{W}$ and the ambient Riemannian manifold $\mathsf{M}$.
	However, from the previous two remarks we know that both $\mathsf{W}$ and $\mathsf{M}$ enter also the first and the third factor in $\uprho$'s subtrahend,
	such that within this remark we work with the following reformulation 
	\[
		\uprho = 
		1 - \frac{\upvarepsilon\ \mathrm{diam}(\mathsf{W})}{\min\{mw , \uplambda\}} \cdot \frac{\nu_{\mathfrak{g}}(\mathsf{W})}{\mathrm{diam}(\mathsf{W})\upkappa \upomega_{d-1}} \cdot \frac{\sup_{t \in (0, \infty)} t \nu_{\mathfrak{g}}(\mathsf{L}(t))}{\|p\|_{\infty}\ \nu_{\mathfrak{g}}(\mathsf{W})},
	\]
	which to some extend renormalises the first and the third factor with respect to the properties of $\mathsf{W}$ and $\mathsf{M}$.
	If $\mathsf{M} = \mathsf{W}$, we can use the Berger isoembolic inequality, see \cite[Theorem VI.2.1]{sakai1996riemannian}, 
	to get a more explicit expression in the dimension and the curvature:
	Combining the bound $\nu_{\mathfrak{g}}(\mathsf{M}) \geq \mathrm{inj}(\mathsf{M})^d/\uppi^d \upomega_d$ from the Berger isoembolic inequality with $\upomega_d/\upomega_{d-1} \geq \sqrt{2\uppi/d}$, which holds by virtue of \cite[Lemma 6]{mathe2007simple},
	we get the estimate
	\begin{equation}\label{Eq: estimate of manifold part of ergodicity constant}
			\begin{aligned}
			\frac{\nu_{\mathfrak{g}}(\mathsf{M})}{\mathrm{diam}(\mathsf{M})\upkappa \upomega_{d-1}}
			&\geq \left(\frac{\mathrm{inj}(\mathsf{M})}{\uppi}\right)^d \sqrt{\frac{2\uppi}{d}} \frac{1}{\mathrm{diam}(\mathsf{M}) \kappa}\\
			&\geq \begin{dcases}
				\sqrt{\frac{2}{\uppi}} \frac{1}{\sqrt{d}} \left(\frac{\mathrm{inj}(\mathsf{M}) \sqrt{\upzeta}}{\uppi}\right)^d,& \upzeta > 0,\\
				\sqrt{2\uppi} \frac{1}{\sqrt{d}} \left(\frac{\mathrm{inj}(\mathsf{M}) }{\uppi\, \mathrm{diam}(\mathsf{M})}\right)^d,& \upzeta = 0,\\
				\sqrt{2\uppi} \frac{1}{\sqrt{d}} \left(\frac{\mathrm{inj}(\mathsf{M}) \sqrt{|\upzeta|}}{\uppi\, \sinh(\sqrt{|\upzeta|}\mathrm{diam}(\mathsf{M}))}\right)^d,& \upzeta < 0,
			\end{dcases}
		\end{aligned}
	\end{equation}
	where in the case $\upzeta > 0$ we also used that $\sqrt{\upzeta}\mathrm{diam}(\mathsf{M}) \leq \uppi$ by the Cheng maximal diameter theorem, see \cite[Theorem IV.3.5]{sakai1996riemannian}.
	As  $\mathrm{inj}(\mathsf{M}) \leq \mathrm{diam}(\mathsf{M})$, 
	the dependence on the dimension of the right hand side in \eqref{Eq: estimate of manifold part of ergodicity constant}  is exponential,
	unless we have $\upzeta > 0$ and $\mathrm{inj}(\mathsf{M}) \sqrt{\upzeta} = \uppi$.
	However, again by virtue of the Cheng maximal diameter theorem this is the case if and only if $\mathsf{M}$ is isometric to a Euclidean sphere with radius $\upzeta^{-1/2}$.
\end{remark}
We conclude by considering the results of Theorem \ref{Thm: uniform ergodicity} and Corollary \ref{C: uniform ergodicity} in two explicit settings.
\begin{example}[Geodesic slice sampling on the sphere]
	As remarked in \citep{durmus2023geodesic},
	for distributions on the Euclidean unit sphere $\mathbb{S}^{d}$, i.e.\ if $\mathsf{M}=\mathsf{W}=\mathbb{S}^{d}$, 
	we recover the geodesic shrinkage slice sampler on the sphere proposed in \citep{habeck2023geodesic}
	if the hyperparameters are set as $m= 1$ and $w= 2\uppi$.
	In this case the random interval generated by the stepping-out procedure is always equivalent to one winding of the great circle.
	Therefore we get that \eqref{Eq: assumption on stepping-out covering probability} holds for $\upvarepsilon= 1$, 
	and Theorem \ref{Thm: uniform ergodicity} recovers the uniform ergodicity statement of \citep{habeck2023geodesic} up to the estimate $\upomega_d/\upomega_{d-1} \geq \sqrt{2\uppi/d}$.
	Observe that an application of Corollary \ref{C: uniform ergodicity} only yields the weaker statement that \eqref{Eq: assumption on stepping-out covering probability}, and consequently \eqref{Eq: uniform ergodicity statement}, holds for $\upvarepsilon= 1/2$.
	This demonstrates that Corollary \ref{C: uniform ergodicity} is not necessarily sharp.
	
	Finally, note that \cite{schaer2024dimension} show in the case of $\pi$ being the uniform distribution on $\mathbb{S}^d$ a dimension independent bound on the Wasserstein contraction coefficient of the geodesic slice sampler on the sphere,
	whereas our bound on the convergence in total variation distance is dimension dependent.
	Note that these two results concern different metrics.
\end{example}
\begin{example}[Hit-and-run algorithm]
	Let $\mathsf{C} \subseteq\mathbb{R}^d$ be the interior of a convex body.
	For $m= \infty$ and any $w \in (0, \infty)$, the geodesic slice sampler\footnote{The ambient Riemannian manifold is $\mathbb{R}^d$.} targeting the uniform distribution on $\mathsf{C}$ becomes the hit-and-run algorithm. 
	The convexity of $\mathsf{C}$ ensures that \eqref{Eq: assumption on stepping-out covering probability} is satisfied for $\upvarepsilon = 1$ under this choice of hyperparameters.
	We get that Theorem \ref{Thm: uniform ergodicity} holds with  $\uprho = 1 - \mathrm{Leb}_{d}(\mathsf{C})/ (\upomega_{d-1} \mathrm{diam}(\mathsf{C})^d)$.
	Up to a factor of two in the subtrahend, this coincides with the bound (implicitly) provided in \cite[Theorem 3]{diaconis1998on}.
\end{example}
 
\section*{Appendix.}
\appendix
\section{The stepping-out distribution}\label{S: stepping-out distribution}
This section contains some auxiliary statements about the stepping-out procedure.
To remain self-contained, we first recall the definition of the stepping-out distribution for finite and infinite hyperparameter $m$ from \citep{durmus2023geodesic} and \citep[Section 4.1.1]{schaer2024slice},
which describes the output of the stepping-out procedure proposed by \citet{neal2003slice}, before we turn to its properties.
To this end let $\mathsf{S} \in \mathcal{B}(\mathbb{R})$ and $\theta \in \mathbb{R}$.
Fix hyperparameters $m$ and $w$ that satisfy the following assumption.
\begin{assumption}\label{A: hyperparameters general}
	Let $w \in (0, \infty)$.
	If $\mathrm{diam}(\mathsf{S}) < \infty$, then let $m \in \mathbb{N} \cup \{\infty\}$.
	Otherwise let $m \in \mathbb{N}$.
\end{assumption}
For simplicity, we omit any dependencies on the hyperparameters $m$ and $w$ in the subsequent notation unless needed for clarity.

Let $\Upsilon \sim \mathrm{Unif}(0, w)$.
For the sake of technical simplicity we assume that $\Upsilon$ only takes values in $(0, w)$.
Set
\[
	L_i^{(\theta)} := \theta -\Upsilon - (i-1) w, \qquad R_i^{(\theta)}:= \theta-\Upsilon + iw, \qquad i \in \mathbb{N}.
\]
Moreover, define the stopping times 
\begin{equation}\label{Eq: stepping-out stopping times}
	\begin{aligned}
		\tau_{\mathsf{S}}^{(\theta)} &:= \begin{dcases}
			\inf\{i \in \mathbb{N} \mid L_i^{(\theta)} \notin \mathsf{S}\} \wedge J, & m < \infty,\\
			\inf\{i \in \mathbb{N} \mid L_i^{(\theta)} \notin \mathsf{S}\} , & m = \infty,
		\end{dcases}\\
		\mathfrak{T}_{\mathsf{S}}^{(\theta)} &:= \begin{dcases}
			\inf\{i \in \mathbb{N} \mid R_i^{(\theta)} \notin\mathsf{S}\} \wedge (m+1-J), & m < \infty,\\
			\inf\{i \in \mathbb{N} \mid R_i^{(\theta)} \notin\mathsf{S}\} , & m= \infty,
		\end{dcases} 
	\end{aligned}
\end{equation}
where $J \sim \mathrm{Unif}(\{1, \ldots, m\})$ is independent of all previous random variables.
Under Assumption \ref{A: hyperparameters general} these stopping times are almost surely finite, see \cite[Lemma 4.1]{schaer2024slice}, and we may define
\[ 
	\boldsymbol{L}_{\mathsf{S}}^{(\theta)}:= L_{\tau_{\mathsf{S}}^{(\theta)}}^{(\theta)}, \qquad 	\boldsymbol{R}_{\mathsf{S}}^{(\theta)}:= R_{\mathfrak{T}_{\mathsf{S}}^{(\theta)}}^{(\theta)}. 
\]
The stepping-out distribution $\xi_{\mathsf{S}}^{(\theta)}$ is then defined as the joint distribution of $\boldsymbol{L}_{\mathsf{S}}^{(\theta)} $ and $\boldsymbol{R}_{\mathsf{S}}^{(\theta)}$.

The following two lemmas guarantee that the stepping-out distribution with hyperparameter $m=\infty$ may be used within the geodesic slice sampler under Assumption \ref{A: hyperparameters} without harming the reversibility, positive semi-definiteness or ergodicity of the kernel of the geodesic slice sampler established in \citep{durmus2023geodesic} and \citep[Theorem 75]{hasenpflug2024slice}.
\begin{lemma}\label{L: Weak convergence stepping out}
	Let $\mathsf{S} \in \mathcal{B}(\mathbb{R})$ with $\mathrm{diam}(\mathsf{S}) < \infty$ and $\theta \in \mathbb{R}$.
	The sequence of measures $(\xi_{\mathsf{S}}^{(\theta,k)})_{k\in \mathbb{N}}$ converges weakly to $\xi_{\mathsf{S}}^{(\theta,\infty)}$,
	where the additional superscript index indicates the choice of the hyperparameter $m$.
\end{lemma}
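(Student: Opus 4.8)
The plan is to exhibit an explicit coupling of all the stepping-out distributions $\xi_{\mathsf{S}}^{(\theta,k)}$, $k \in \mathbb{N} \cup \{\infty\}$, on one probability space, under which the finite-$m$ output coincides with the infinite-$m$ output for all sufficiently large $m$; eventual (hence almost sure) equality then immediately yields the asserted weak convergence.

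First I would record the structural fact that makes everything work. Since $\mathrm{diam}(\mathsf{S}) < \infty$, the set $\mathsf{S}$ is bounded, so the sequences $L_i^{(\theta)} = \theta - \Upsilon - (i-1)w$ and $R_i^{(\theta)} = \theta - \Upsilon + iw$ leave $\mathsf{S}$ for $i$ large. Hence
\[
	N_L := \inf\{ i \in \mathbb{N} \mid L_i^{(\theta)} \notin \mathsf{S}\}, \qquad N_R := \inf\{ i \in \mathbb{N} \mid R_i^{(\theta)} \notin \mathsf{S}\}
\]
are (deterministic functions of $\Upsilon$, and thus) almost surely finite random variables; this is precisely the almost sure finiteness of $\tau_{\mathsf{S}}^{(\theta,\infty)} = N_L$ and $\mathfrak{T}_{\mathsf{S}}^{(\theta,\infty)} = N_R$ guaranteed by \cite[Lemma 4.1]{schaer2024slice}.

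Next I would build the coupling. Let $\Upsilon \sim \mathrm{Unif}(0,w)$ and, independently, $U \sim \mathrm{Unif}(0,1)$, and for each $k \in \mathbb{N}$ put $J_k := \lceil kU \rceil$, so that $J_k \sim \mathrm{Unif}(\{1,\dots,k\})$ is independent of $\Upsilon$. Feeding this single $\Upsilon$ together with $J_k$ into \eqref{Eq: stepping-out stopping times} realises $\tau_{\mathsf{S}}^{(\theta,k)} = N_L \wedge J_k$ and $\mathfrak{T}_{\mathsf{S}}^{(\theta,k)} = N_R \wedge (k{+}1{-}J_k)$, hence a pair $(\boldsymbol{L}_{\mathsf{S}}^{(\theta,k)}, \boldsymbol{R}_{\mathsf{S}}^{(\theta,k)})$ with the correct law $\xi_{\mathsf{S}}^{(\theta,k)}$, while the same $\Upsilon$ alone realises $(\boldsymbol{L}_{\mathsf{S}}^{(\theta,\infty)}, \boldsymbol{R}_{\mathsf{S}}^{(\theta,\infty)}) = (L_{N_L}^{(\theta)}, R_{N_R}^{(\theta)}) \sim \xi_{\mathsf{S}}^{(\theta,\infty)}$. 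Almost surely $U \in (0,1)$, so $J_k = \lceil kU\rceil \to \infty$ and $k{+}1{-}J_k \geq k(1-U) \to \infty$; combined with $N_L, N_R < \infty$ almost surely, there is an almost surely finite random $K_0$ with $J_k > N_L$ and $k{+}1{-}J_k > N_R$ for all $k \geq K_0$. For such $k$ the truncations are inactive, so $\tau_{\mathsf{S}}^{(\theta,k)} = N_L$, $\mathfrak{T}_{\mathsf{S}}^{(\theta,k)} = N_R$, and therefore $(\boldsymbol{L}_{\mathsf{S}}^{(\theta,k)}, \boldsymbol{R}_{\mathsf{S}}^{(\theta,k)}) = (\boldsymbol{L}_{\mathsf{S}}^{(\theta,\infty)}, \boldsymbol{R}_{\mathsf{S}}^{(\theta,\infty)})$. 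Being eventually constant, this pair converges almost surely, which implies convergence in distribution, i.e.\ $\xi_{\mathsf{S}}^{(\theta,k)} \to \xi_{\mathsf{S}}^{(\theta,\infty)}$ weakly; in fact $d_{\mathrm{tv}}(\xi_{\mathsf{S}}^{(\theta,k)}, \xi_{\mathsf{S}}^{(\theta,\infty)}) \leq \mathbb{P}(J_k \leq N_L) + \mathbb{P}(k{+}1{-}J_k \leq N_R) \to 0$ by dominated convergence, which is strictly stronger than required.

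There is essentially no analytic obstacle here; the only points needing a line of care are verifying that $J_k = \lceil kU\rceil$ is genuinely uniform on $\{1,\dots,k\}$ and independent of $\Upsilon$ (so that each coupled pair really has law $\xi_{\mathsf{S}}^{(\theta,k)}$), and the trivial bookkeeping in degenerate cases such as $\mathsf{S} = \emptyset$ or small $N_L, N_R$, where the claimed equalities are immediate.
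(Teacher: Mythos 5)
Your proof is correct and follows essentially the same route as the paper: both arguments couple the finite-$m$ and infinite-$m$ constructions through the same $\Upsilon$, use that $\mathrm{diam}(\mathsf{S})<\infty$ forces the untruncated stopping times to be finite (the paper even uses the deterministic bound $c_0=\lceil \mathrm{diam}(\mathsf{S})/w\rceil+1$), and conclude that the truncation by $J^{(k)}$ and $k+1-J^{(k)}$ is inactive with probability tending to one, so the coupled interval endpoints converge and weak convergence follows. The only cosmetic difference is that you additionally couple all the $J_k$ through a single uniform $U$ to obtain almost sure eventual equality and a total variation bound, whereas the paper establishes convergence in probability of the coupled pairs with an explicit rate $2(c_0-1)/k$; both versions yield the stated weak convergence.
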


\begin{proof}
	As in the statement, any additional superscript indices indicate the choice of the hyperparameter $m$.
	We show that $( \boldsymbol{L}_{\mathsf{S}}^{(\theta, k)},\boldsymbol{R}_{\mathsf{S}}^{(\theta,k)} )_{k \in \mathbb{N}}$ converges in probability to $( \boldsymbol{L}_{\mathsf{S}}^{(\theta, \infty)},\boldsymbol{R}_{\mathsf{S}}^{(\theta,\infty)} )$. This implies the desired weak convergence.
	
	Let $\varepsilon > 0$. Without loss of generality we may assume that $\varepsilon < \sqrt{2}w$.
	Note that for all $k \in \mathbb{N}$
	\begin{align*}
		\left |   \boldsymbol{L}_{\mathsf{S}}^{(\theta,k)} -  \boldsymbol{L}_{\mathsf{S}}^{(\theta, \infty)}\right |& =
		\begin{dcases}
			0, & \tau_{\mathsf{S}}^{(\theta,\infty)} \leq J^{(k)},\\
			(\tau_{\mathsf{S}}^{(\theta,\infty)} - J^{(k)})w \geq w, & \tau_{\mathsf{S}}^{(\theta,\infty)} > J^{(k)},
		\end{dcases} \\
		\left |   \boldsymbol{R}_{\mathsf{S}}^{(\theta,k)} -  \boldsymbol{R}_{\mathsf{S}}^{(\theta, \infty)}\right |& =
		\begin{dcases}
			0, & \mathfrak{T}_{\mathsf{S}}^{(\theta,\infty)} \leq k+1-J^{(k)},\\
			(\mathfrak{T}_{\mathsf{S}}^{(\theta,\infty)} - (k+1-J^{(k)}))w \geq w, & \mathfrak{T}_{\mathsf{S}}^{(\theta,\infty)} > k+1-J^{(k)}.
		\end{dcases}
	\end{align*}
	Since 
	\[
			\tau_{\mathsf{S}}^{(\theta, \infty)}, \mathfrak{T}_{\mathsf{S}}^{(\theta, \infty)}\leq \left\lceil\frac{\mathrm{diam}(S)}{w}\right \rceil + 1 =: c_0 \in \mathbb{N}
	\]
	by construction,
	for all $k \in \mathbb{N}$ follows
	\begin{align*}
		&\{\|\left( \boldsymbol{L}_{\mathsf{S}}^{(\theta, \infty)},\boldsymbol{R}_{\mathsf{S}}^{(\theta,\infty)}  \right) - \left( \boldsymbol{L}_{\mathsf{S}}^{(\theta,k)},\boldsymbol{R}_{\mathsf{S}}^{(\theta,k)}  \right)\| > \varepsilon\}\\
		&\quad\subseteq \{\left |   \boldsymbol{L}_{\mathsf{S}}^{(\theta,k)} -  \boldsymbol{L}_{\mathsf{S}}^{(\theta, \infty)}\right | > \frac{\varepsilon}{\sqrt{2}}\}
		\cup \{	\left |   \boldsymbol{R}_{\mathsf{S}}^{(\theta,\infty)} -  \boldsymbol{R}_{\mathsf{S}}^{(\theta, \infty)}\right |> \frac{\varepsilon}{\sqrt{2}}\}\\
		&\quad= \{\tau_{\mathsf{S}}^{(\theta,\infty)} > J^{(k)}\} \cup \{\mathfrak{T}_{\mathsf{S}}^{(\theta,k)} > k+1-J^{(k)}\}\\
		&\quad \subseteq \{c_0 > J^{(k)}\} \cup \{J^{(k)} > k+1-c_0\}.
	\end{align*}
	This yields
	\begin{align*}
		&\lim_{k \to \infty} \mathbb{P}\left(\|\left( \boldsymbol{L}_{\mathsf{S}}^{(\theta, \infty)},\boldsymbol{R}_{\mathsf{S}}^{(\theta,\infty)}  \right) - \left( \boldsymbol{L}_{\mathsf{S}}^{(\theta,k)},\boldsymbol{R}_{\mathsf{S}}^{(\theta,k)}  \right)\| > \varepsilon\right)\\
		&\quad \leq\lim_{k \to \infty} \mathbb{P}(c_0 > J^{(k)}) + \mathbb{P}(J^{(k)} > k+1-c_0)
		= \lim_{k \to \infty} 2\,\frac{c_0-1}{k} = 0.
	\end{align*}
\end{proof}

To state the next lemma set for $\alpha \in \mathbb{R}$
\begin{equation*}
	\Lambda_\alpha : \mathbb{R}  \to \mathbb{R}, \qquad
	\theta  \mapsto \alpha - \theta
\end{equation*}
and
\begin{equation*}
	\lambda_\alpha : \mathbb{R}^2  \to \mathbb{R}^2, \qquad
	(\ell, r)  \mapsto (\alpha - r, \alpha -\ell). 
\end{equation*}
\begin{lemma}
	Let $\mathsf{S} \in \mathcal{B}(\mathbb{R})$ with $\mathrm{diam}(\mathsf{S}) < \infty$ and $\theta, \alpha \in \mathbb{R}$.
	For any choice of hyperparameter $m \in \mathbb{N} \cup\{\infty\}$ which satisfy Assumption \ref{A: hyperparameters general}
	we have
	\begin{align*}
		\int_{\mathbb{R}^2} f(\ell, r) \mathbbm{1}_{(\ell, r)}(\alpha) \ \xi_{\mathsf{S}}^{(\theta)}\big(\d (\ell, r) \big)
		= \int_{\mathbb{R}^2} f(\ell, r) \mathbbm{1}_{(\ell, r)}(\theta) \ \xi_{\mathsf{S}}^{(\alpha)}\big(\d (\ell, r) \big)
	\end{align*}
	for all measurable functions $f: \mathbb{R}^2 \to [0,\infty)$, and 
	\[ 
		\xi_{\Lambda_\alpha(\mathsf{S})}^{(\theta)} = (\lambda_\alpha)_\sharp\xi_{\mathsf{S}}^{(\Lambda_\alpha(\theta))}.
	\]
\end{lemma}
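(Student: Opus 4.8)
The plan is to establish both identities by means of explicit measure-preserving couplings of the randomising variables $\Upsilon \sim \mathrm{Unif}(0,w)$ and, when $m<\infty$, $J \sim \mathrm{Unif}(\{1,\dots,m\})$ that enter the definition of the stepping-out distribution. Since $\mathrm{diam}(\mathsf S)<\infty$ (and $\Lambda_\alpha$ is an isometry, so $\mathrm{diam}(\Lambda_\alpha(\mathsf S))<\infty$ as well), the stopping times $\tau_{\mathsf S}^{(\cdot)},\mathfrak T_{\mathsf S}^{(\cdot)}$ are almost surely finite by \cite[Lemma 4.1]{schaer2024slice}, so all random variables below are well defined. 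For the first identity it is enough to prove that the two measures $\mathsf B \mapsto \mathbb P\big[(\boldsymbol L_{\mathsf S}^{(\theta)},\boldsymbol R_{\mathsf S}^{(\theta)}) \in \mathsf B,\ \alpha \in (\boldsymbol L_{\mathsf S}^{(\theta)},\boldsymbol R_{\mathsf S}^{(\theta)})\big]$ and $\mathsf B \mapsto \mathbb P\big[(\boldsymbol L_{\mathsf S}^{(\alpha)},\boldsymbol R_{\mathsf S}^{(\alpha)}) \in \mathsf B,\ \theta \in (\boldsymbol L_{\mathsf S}^{(\alpha)},\boldsymbol R_{\mathsf S}^{(\alpha)})\big]$ on $\mathcal B(\mathbb R^2)$ coincide; integrating $f$ against them then gives the stated equality.

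For the first identity, the key observation is that the stepping-out output from a point is completely determined by the underlying grid $\{\theta-\Upsilon+kw : k\in\mathbb Z\}$, by which cell of this grid contains the starting point, and by the two caps $J$ and $m+1-J$. I would therefore couple $\Upsilon'$ to $\Upsilon$ through the (Lebesgue-preserving) rotation of $(0,w)$ that forces the grid generated from $\alpha$ to coincide with the grid generated from $\theta$, and set $J' := J+k$, where $k=k(\Upsilon)\in\mathbb Z$ is the signed index of the cell of $\alpha$ relative to the cell of $\theta$ in this common grid. On the event $\{\alpha\in(\boldsymbol L_{\mathsf S}^{(\theta)},\boldsymbol R_{\mathsf S}^{(\theta)})\}$ one verifies: (i) every grid point lying strictly between $\theta$ and $\alpha$ belongs to $\mathsf S$, so the ``natural'' left and right stopping indices seen from $\alpha$ are exactly those seen from $\theta$ shifted by $k$; (ii) the constraint this event imposes on $J$ is carried by $J\mapsto J+k$ bijectively and fibrewise-measure-preservingly onto the analogous constraint that $\{\theta\in(\boldsymbol L_{\mathsf S}^{(\alpha)},\boldsymbol R_{\mathsf S}^{(\alpha)})\}$ imposes on $J'$; and (iii) the two output intervals agree, $(\boldsymbol L_{\mathsf S}^{(\alpha)},\boldsymbol R_{\mathsf S}^{(\alpha)}) = (\boldsymbol L_{\mathsf S}^{(\theta)},\boldsymbol R_{\mathsf S}^{(\theta)})$, which in particular contains $\theta$. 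Applying the same reasoning with the roles of $\theta$ and $\alpha$ exchanged shows the coupling is a measure-preserving bijection between the two events, and the equality of measures from the first paragraph follows. When $m=\infty$ there is no cap and the output depends only on the grid, so only the rotation of $\Upsilon$ is needed and the argument shortens considerably.

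For the reflection identity I would couple the construction of $\xi_{\Lambda_\alpha(\mathsf S)}^{(\theta)}$, driven by $(\Upsilon,J)$, to the construction of $\xi_{\mathsf S}^{(\Lambda_\alpha(\theta))}$, driven by $(\Upsilon',J'):=(w-\Upsilon,\ m+1-J)$ (only $\Upsilon':=w-\Upsilon$ when $m=\infty$); both $\Upsilon\mapsto w-\Upsilon$ on $(0,w)$ and $J\mapsto m+1-J$ on $\{1,\dots,m\}$ are measure-preserving involutions. A direct computation gives, with these coupled variables, $\alpha - L_i^{(\theta)} = R_i^{(\Lambda_\alpha(\theta))}$ and $\alpha - R_i^{(\theta)} = L_i^{(\Lambda_\alpha(\theta))}$ for all $i$, together with $\Lambda_\alpha(\Lambda_\alpha(\mathsf S))=\mathsf S$; hence the left and right stopping times get interchanged, with the two caps exchanged precisely by the choice of $J'$, so that $(\boldsymbol L_{\mathsf S}^{(\Lambda_\alpha(\theta))},\boldsymbol R_{\mathsf S}^{(\Lambda_\alpha(\theta))}) = \lambda_\alpha\big(\boldsymbol L_{\Lambda_\alpha(\mathsf S)}^{(\theta)},\boldsymbol R_{\Lambda_\alpha(\mathsf S)}^{(\theta)}\big)$ holds pointwise. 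Passing to distributions yields $\xi_{\mathsf S}^{(\Lambda_\alpha(\theta))}=(\lambda_\alpha)_\sharp\xi_{\Lambda_\alpha(\mathsf S)}^{(\theta)}$, and since $\lambda_\alpha\circ\lambda_\alpha=\mathrm{id}$ this is equivalent to the asserted $\xi_{\Lambda_\alpha(\mathsf S)}^{(\theta)}=(\lambda_\alpha)_\sharp\xi_{\mathsf S}^{(\Lambda_\alpha(\theta))}$.

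The main obstacle is the combinatorial bookkeeping in steps (ii)--(iii) of the first identity: one must track how the cap that limited the extension to one side from $\theta$ becomes the cap limiting the extension to the opposite side from $\alpha$, treat the cases $k\ge 0$ and $k<0$ separately, and confirm that $J+k$ lands in $\{1,\dots,m\}$ exactly on the containment event. The remaining verifications are routine translations of grid indices.
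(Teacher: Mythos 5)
Your proposal is correct, but it proves the lemma by a genuinely different route than the paper. The paper handles $m\in\mathbb{N}$ simply by citing \citep[Lemmas 11 and 12]{durmus2023geodesic} and only has to extend both identities to $m=\infty$, which it does by approximation: the weak convergence $\xi_{\mathsf{S}}^{(\theta,k)}\Rightarrow\xi_{\mathsf{S}}^{(\theta,\infty)}$ of Lemma \ref{L: Weak convergence stepping out}, the Portmanteau theorem (the discontinuity sets of the integrands are null sets by \citep[Lemma 19]{durmus2023geodesic}), and a measure-determining class of bounded Lipschitz functions. You instead argue pathwise and uniformly in $m$, coupling the driving randomness directly: the grid-aligning rotation of $\Upsilon$ together with $J'=J+k$ for the symmetry identity, and the involution $(\Upsilon,J)\mapsto(w-\Upsilon,\,m+1-J)$ for the reflection identity. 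The points you flag as the main obstacle do go through: every grid point strictly inside the output interval lies in $\mathsf{S}$, so on the event $\alpha\in(\boldsymbol{L}_{\mathsf{S}}^{(\theta)},\boldsymbol{R}_{\mathsf{S}}^{(\theta)})$ the cell offset satisfies $1-\tau_{\mathsf{S}}^{(\theta)}\le k\le \mathfrak{T}_{\mathsf{S}}^{(\theta)}-1$, hence $J+k\in\{1,\dots,m\}$ and the capped scans from $\alpha$ stop at the same two grid points; for the reflection one indeed has $\alpha-L_i^{(\theta)}=R_i^{(\Lambda_\alpha(\theta))}$ and $\alpha-R_i^{(\theta)}=L_i^{(\Lambda_\alpha(\theta))}$ under your coupling, so the left and right stopping rules (including the caps) swap and the outputs are related by $\lambda_\alpha$ pointwise; and your reduction of the first claim to equality of the two restricted measures is legitimate by monotone approximation. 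In terms of trade-offs: the paper's argument is short and leans on existing results, at the cost of the limit/Portmanteau machinery and the external null-set input; yours is self-contained, needs no limiting argument, and treats $m\in\mathbb{N}$ and $m=\infty$ on the same footing, at the cost of re-deriving the finite-$m$ lemmas of \citet{durmus2023geodesic} and of combinatorial index bookkeeping (the cases $k\ge 0$ and $k<0$, and the bijectivity of the map between the two events) that your write-up sketches but would need to be executed in full for a complete proof.
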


\begin{proof}
	The additional superscript index indicates the choice of the hyperparameter $m$.
	For $m \in \mathbb{N}$ both statements are shown in \cite[Lemmas 11 and 12]{durmus2023geodesic}, such that it remains to treat the case $m= \infty$.
	By \cite[Theorem 13.11]{klenke2020probability} it suffices to show the first claim and 
	\[
		\int_{\mathbb{R}^2} f(\ell, r) \ \xi_{\Lambda_\alpha(\mathsf{S})}^{(\theta, \infty)}\big(\d (\ell, r) \big)
		= \int_{\mathbb{R}^2} f(\ell, r) \ (\lambda_\alpha)_\sharp\xi_{\mathsf{S}}^{(\Lambda_\alpha(\theta), \infty)}\big(\d (\ell, r) \big).
	\]
	for functions $f:\mathbb{R}^2\to [0,1]$ that are Lipschitz continuous with Lipschitz constant 1.
	Let $f$ be such a function.
	We treat the first claim.
	As $f$ is in particular continuous, the sets of discontinuities of $(\ell,r) \mapsto f(\ell,r) \mathbbm{1}_{(\ell, r)}(\alpha) $ and $(\ell,r) \mapsto f(\ell,r) \mathbbm{1}_{(\ell, r)}(\theta)$ are subsets of
	\begin{align*}
		(\{\alpha\} \times \mathbb{R} )\cup ( \mathbb{R} \times \{\alpha\}), \qquad  (\{\theta\} \times \mathbb{R} )\cup ( \mathbb{R} \times \{\theta\}),
	\end{align*}
	respectively.
	These are both nullsets with respect to $\xi_{\mathsf{S}}^{(\alpha, \infty)}$ and $\xi_{\mathsf{S}}^{(\theta, \infty)}$ by virtue of \cite[Lemma 19]{durmus2023geodesic}.
	Therefore combining Lemma \ref{L: Weak convergence stepping out} with the Portemanteau theorem (see \cite[Theorem 13.16]{klenke2020probability}) and the statement for finite hyperparameters $m$ yields
	\begin{align*}
		\int_{\mathbb{R}^2} f(\ell, r) \mathbbm{1}_{(\ell, r)}(\alpha) \ \xi_{\mathsf{S}}^{(\theta, \infty)}\big(\d (\ell, r) \big)
		&= \lim_{k \to \infty} \int_{\mathbb{R}^2} f(\ell, r) \mathbbm{1}_{(\ell, r)}(\alpha) \ \xi_{\mathsf{S}}^{(\theta, k)}\big(\d (\ell, r) \big)\\
		&= \lim_{k \to \infty} \int_{\mathbb{R}^2} f(\ell, r) \mathbbm{1}_{(\ell, r)}(\theta) \ \xi_{\mathsf{S}}^{(\alpha, k)}\big(\d (\ell, r) \big)\\
		&= \int_{\mathbb{R}^2} f(\ell, r) \mathbbm{1}_{(\ell, r)}(\theta) \ \xi_{\mathsf{S}}^{(\alpha, \infty)}\big(\d (\ell, r) \big).
	\end{align*}

	We turn to the second claim. As $\lambda_\alpha$ is an isometry, also $f \circ \lambda_\alpha: \mathbb{R}^2\to [0,1]$ is Lipschitz continuous with Lipschitz constant 1.
	Hence applying again Portemanteau theorem and the statement for finite hyperparameters $m$ yields
	\begin{align*}
		\int_{\mathbb{R}^2} f(\ell, r) \ \xi_{\Lambda_\alpha(\mathsf{S})}^{(\theta, \infty)}\big(\d (\ell, r) \big)
		&= \lim_{k \to \infty} 	\int_{\mathbb{R}^2} f(\ell, r) \ \xi_{\Lambda_\alpha(\mathsf{S})}^{(\theta, k)}\big(\d (\ell, r) \big)\\
		&= \lim_{k \to \infty} \int_{\mathbb{R}^2} f(\ell, r) \ (\lambda_\alpha)_\sharp\xi_{\mathsf{S}}^{(\Lambda_\alpha(\theta), k)}\big(\d (\ell, r) \big)\\
		&= \lim_{k \to \infty} \int_{\mathbb{R}^2} (f \circ \lambda_\alpha)(\ell, r) \ \xi_{\mathsf{S}}^{(\Lambda_\alpha(\theta), k)}\big(\d (\ell, r) \big)\\
		&= \int_{\mathbb{R}^2} (f \circ \lambda_\alpha)(\ell, r) \ \xi_{\mathsf{S}}^{(\Lambda_\alpha(\theta), \infty)}\big(\d (\ell, r) \big)\\
		&= \int_{\mathbb{R}^2} f(\ell, r) \ (\lambda_\alpha)_\sharp\xi_{\mathsf{S}}^{(\Lambda_\alpha(\theta), \infty)}\big(\d (\ell, r) \big).
	\end{align*}
\end{proof}

Finally, we establish the lemma needed in the proof of Corollary \ref{C: uniform ergodicity}.
\begin{lemma}\label{L: lower bound on stepping-out covering probability}
	Let $\mathsf{S} \in \mathcal{B}(\mathbb{R})$ and fix hyperparameters $m$ and $w$ that satisfy Assumption \ref{A: hyperparameters general}.
	Moreover, let $\theta \in \mathsf{S}$  and $C \in (\theta, \infty]$ and such that $b:= \sup \mathsf{S} \cap [\theta, C) < \infty$.
	If $(b-\theta)m^{-1}\mathbbm{1}_{(0, \infty)}(m)<w- \updelta \mathbbm{1}_{[2, \infty]}(m)$, then 
	\[
	\xi_{\mathsf{S}}^{(\theta)}(\mathsf{R})
	\geq 1 - \frac{b-\theta}{mw} \mathbbm{1}_{(0, \infty)}(m) - \frac{\updelta}{w}\mathbbm{1}_{[2, \infty]}(m)\qquad \in (0,1],
	\]
	where $\mathsf{R} := \left\{ (\ell, r) \in \mathbb{R}^2\ \mid\ [\theta, C) \cap \mathsf{S} \subseteq (\ell,r)\right\}$ and $\updelta := \mathrm{Leb}_{1}([\theta,b)\setminus \mathsf{S})$.
\end{lemma}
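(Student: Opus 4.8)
The plan is to reduce the inclusion $[\theta, C)\cap\mathsf{S} \subseteq (\boldsymbol{L}_{\mathsf{S}}^{(\theta)}, \boldsymbol{R}_{\mathsf{S}}^{(\theta)})$ to a one-sided statement about the right endpoint, and then to control separately the two ways in which the stepping-out procedure can stop before passing $b$.

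First I would observe that the left endpoint never obstructs the inclusion: since $\tau_{\mathsf{S}}^{(\theta)}\geq 1$ and $i\mapsto L_i^{(\theta)} = \theta-\Upsilon-(i-1)w$ is non-increasing, we have $\boldsymbol{L}_{\mathsf{S}}^{(\theta)}\leq L_1^{(\theta)} = \theta-\Upsilon < \theta$, while every element of $[\theta, C)\cap\mathsf{S}$ lies in $[\theta, b]$ by definition of $b$. Hence the event $\{\boldsymbol{R}_{\mathsf{S}}^{(\theta)} > b\}$ is contained in $\{(\boldsymbol{L}_{\mathsf{S}}^{(\theta)},\boldsymbol{R}_{\mathsf{S}}^{(\theta)})\in\mathsf{R}\}$, so it suffices to bound $\mathbb{P}(\boldsymbol{R}_{\mathsf{S}}^{(\theta)} > b)$ from below. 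Writing $\boldsymbol{R}_{\mathsf{S}}^{(\theta)} = R_{\mathfrak{T}_{\mathsf{S}}^{(\theta)}}^{(\theta)} = \theta-\Upsilon+\mathfrak{T}_{\mathsf{S}}^{(\theta)}w$ and introducing the random count $N := \#\{i\in\mathbb{N}\mid R_i^{(\theta)}\leq b\} = \lfloor(b-\theta+\Upsilon)/w\rfloor$ of grid points $R_i^{(\theta)}$ lying in $(\theta, b]$, the event $\{\boldsymbol{R}_{\mathsf{S}}^{(\theta)} > b\}$ is exactly $\{\mathfrak{T}_{\mathsf{S}}^{(\theta)}\geq N+1\}$.

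Next I would record two elementary estimates. Since $R_i^{(\theta)}\sim\mathrm{Unif}(\theta+(i-1)w,\theta+iw)$ for every $i\in\mathbb{N}$, a short computation (the expected number of points of a uniformly shifted grid of spacing $w$ inside an interval of length $b-\theta$) gives $\mathbb{E}[N] = (b-\theta)/w$. Moreover, setting $\sigma := \inf\{i\in\mathbb{N}\mid R_i^{(\theta)}\notin\mathsf{S}\}$, the same distributional fact together with a union bound over $i$ and the disjointness of the intervals $(\theta+(i-1)w,\theta+iw)$ yields
\[
	\mathbb{P}(\sigma\leq N)\ \leq\ \sum_{i\in\mathbb{N}}\mathbb{P}\big(R_i^{(\theta)}\in(\theta,b]\setminus\mathsf{S}\big)\ =\ \frac{1}{w}\,\mathrm{Leb}_{1}\big((\theta,b]\setminus\mathsf{S}\big)\ =\ \frac{\updelta}{w},
\]
where the last equality uses $\theta\in\mathsf{S}$ and that $\{b\}$ is a null set; consequently $\mathbb{P}(\sigma\geq N+1)\geq 1-\updelta/w$.

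Finally I would combine these in the three regimes for $m$. If $m=1$, then $\mathfrak{T}_{\mathsf{S}}^{(\theta)}\equiv 1$ (the budget $m+1-J$ equals $1$), so $\{\mathfrak{T}_{\mathsf{S}}^{(\theta)}\geq N+1\}=\{N=0\}$ and Markov's inequality gives $\mathbb{P}(N=0)\geq 1-\mathbb{E}[N]=1-(b-\theta)/w$, which matches the claim since $\mathbbm{1}_{[2,\infty]}(1)=0$. If $m=\infty$, then $\mathfrak{T}_{\mathsf{S}}^{(\theta)}=\sigma$ and the second estimate gives the bound $1-\updelta/w$ directly. The delicate case is $2\leq m<\infty$, where $\mathfrak{T}_{\mathsf{S}}^{(\theta)}=\sigma\wedge(m+1-J)$ with $J\sim\mathrm{Unif}(\{1,\dots,m\})$ independent of $\Upsilon$: conditioning on $\Upsilon$ (which determines both $\sigma$ and $N$) and using $\mathbb{P}(m+1-J\geq N+1\mid\Upsilon)=\max(m-N,0)/m\geq 1-N/m$ together with $\mathbbm{1}_{A}(1-c)\geq\mathbbm{1}_{A}-c$ for $c\geq 0$, one obtains
\[
	\mathbb{P}\big(\mathfrak{T}_{\mathsf{S}}^{(\theta)}\geq N+1\big)\ \geq\ \mathbb{P}(\sigma\geq N+1)-\frac{\mathbb{E}[N]}{m}\ \geq\ 1-\frac{\updelta}{w}-\frac{b-\theta}{mw}.
\]
In each case the lower bound produced coincides with the asserted expression, and the hypothesis relating $m$, $w$, $b-\theta$ and $\updelta$ is precisely what makes it lie in $(0,1]$. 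The main obstacle is this last case: one has to avoid double-counting the two independent mechanisms by which stepping out can stop short of $b$ — running into $\mathbb{R}\setminus\mathsf{S}$ (costing $\updelta/w$) and exhausting the budget of $m$ steps (costing $(b-\theta)/(mw)$) — which is exactly what the conditioning on $\Upsilon$ and the two linear bounds above accomplish; note also that $m=1$ is genuinely sharper because there the budget always stops the procedure first and $\mathsf{S}$ is never tested, so the $\updelta$ term disappears.
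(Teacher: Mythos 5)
Your proposal is correct and takes essentially the same route as the paper's proof: reduce to the event that the right endpoint exceeds $b$, then bound the probability of stopping short by the two contributions $\updelta/w$ (hitting $\mathbb{R}\setminus\mathsf{S}$, via a union bound over the disjoint intervals swept by the $R_i^{(\theta)}$) and $(b-\theta)/(mw)$ (exhausting the budget), with the same special treatment of $m=1$ and $m=\infty$. The only difference is bookkeeping: you route the budget term through the counting variable $N$, conditioning on $\Upsilon$ and Markov's inequality, whereas the paper computes $\mathbb{P}\big(R^{(\theta)}_{m+1-J}<b\big)=(b-\theta)/(mw)$ directly from the event decomposition $\{\boldsymbol{R}_{\mathsf{S}}^{(\theta)}<b\}\subseteq\{R^{(\theta)}_T<b,\,T<m+1-J\}\cup\{R^{(\theta)}_{m+1-J}<b\}$.
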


\begin{proof}
	Since $\boldsymbol{L}_{\mathsf{S}}^{(\theta)} < \theta$ be construction,
	we have $\{(\boldsymbol{L}_{\mathsf{S}}^{(\theta)}, \boldsymbol{R}_{\mathsf{S}}^{(\theta)}) \in \mathsf{R}\} = \{\boldsymbol{R}_{\mathsf{S}}^{(\theta)} \geq b\}$, such that $\xi_{\mathsf{S}}^{(\theta)}(\mathsf{R}) = 1 - \mathbb{P}(\boldsymbol{R}_{\mathsf{S}}^{(\theta)} < b)$.
	Define the not necessarily finite stopping time
	\[
	T:= \inf\{i \in \mathbb{N} \mid R_i^{(\theta)} \notin \mathsf{S}\}.
	\]
	With the convention $R_\infty^{(\theta)} \equiv \infty$, we get
	\[
	\left \{R^{(\theta)}_T < b \right \} = \bigsqcup_{k \in \mathbb{N}}\left \{ T=k, R^{(\theta)}_k < b\right \} \subseteq \bigcup_{k\in \mathbb{N}} \left \{R_k^{(\theta)} \in (\theta, b)\setminus \mathsf{S}\right \}.
	\]
	Therefore 
	\begin{align*}
		\mathbb{P}\left(R^{(\theta)}_T < b\right)
		&\leq \sum_{k\in \mathbb{N}} \frac{1}{w} \int_{(0,\,w)} \mathbbm{1}_{(\theta,\, b)\setminus \mathsf{S}}(\theta-u + kw)\ \mathrm{Leb}_{1}(\d u)\\
		&= \sum_{k\in \mathbb{N}} \frac{1}{w} \int_{(\theta+(k-1)w,\, \theta + kw)} \mathbbm{1}_{(\theta,\, b)\setminus \mathsf{S}}(v)\ \mathrm{Leb}_{1}(\d v)
		= \frac{\updelta}{w}.
	\end{align*}
	If $m= \infty$, then $\mathfrak{T}_{\mathsf{S}}^{(\theta)} = T$, such that in this case
	\[
	\xi_{\mathsf{S}}^{(\theta)}(\mathsf{R})
	= 1 - \mathbb{P}\left(R^{(\theta)}_T < b\right)
	\geq 1 - \frac{\updelta}{w},
	\]
	which is strictly positive by assumption.
	If $m <  \infty$,
	then $\{\boldsymbol{R}_{\mathsf{S}}^{(\theta)} < b\}\subseteq \{R^{(\theta)}_T < b, T < m+1-J\} \cup \{R^{(\theta)}_{m+1-J} < b\}$.
	We have
	\begin{align*}
		\mathbb{P}\left(R^{(\theta)}_{m+1-J} < b\right)
		&= \frac{1}{m} \sum_{k=1}^m \frac{1}{w} \int_{(0,\,w)} \mathbbm{1}_{(\theta,\,b)}\big(\theta - u + (m+1-k)w\big)\ \mathrm{Leb}_{1}(\d u)\\
		&= \frac{1}{mw} \sum_{k=1}^m  \int_{(\theta + (m-k)w,\,\theta + (m+1-k)w)} \mathbbm{1}_{(\theta,b)}(v)\ \mathrm{Leb}_{1}(\d v)\\
		&= \frac{1}{mw} \mathrm{Leb}_{1}\big((\theta,\,  b)\big)
		=\frac{b-\theta}{mw}.
	\end{align*}
	Moreover, if $m= 1$, then $\{T < m+1-J\} = \{T < 1\} = \emptyset$.
	Therefore in the case $m < \infty$ we get
	\begin{align*}
		\xi_{\mathsf{S}}^{(\theta)}(\mathsf{R}) 
		&\geq 1 - \mathbbm{1}_{[2, \infty]}(m)\mathbb{P}\left(R^{(\theta)}_T < b\right) - \mathbb{P}\left(R^{(\theta)}_{m+1-J} < b\right)\\
		&\geq 1 - \mathbbm{1}_{[2, \infty]}(m) \frac{\updelta}{w} - \frac{b-\theta}{mw},
	\end{align*}
	which is strictly positive by assumption.
\end{proof}

\section{Lower bound for the reeled shrinkage kernel}\label{S: shrinkage kernel}
In this section we provide the lower bound on the reeled shrinkage kernel which we use in the proof of the main statement. 
For the convenience of the reader we start by briefly restating the definition of the reeled shrinkage kernel.
A more gentle introduction can be found in \citep{hasenpflug2023reversibility, durmus2023geodesic}.

For $\alpha, \beta \in [0, 2\uppi)$, we use the shorthand notation
\[
I(\alpha, \beta) := \begin{dcases}
	[\alpha, \beta), & \alpha < \beta,\\
	[0, \beta) \cup [\alpha, 2\uppi), & \alpha > \beta,\\
	[0, 2\uppi), & \alpha = \beta,
\end{dcases}
\quad 
J(\alpha, \beta):= \begin{dcases}
	[\alpha, \beta), & \alpha < \beta,\\
	[0, \beta) \cup [\alpha, 2\uppi), & \alpha > \beta,\\
	\emptyset, & \alpha = \beta,
\end{dcases}
\]
and define $\Lambda_\theta := 	\{(\alpha, \alpha^{\min}, \alpha^{\max}) \in [0, 2\uppi)^3 \mid \alpha, \theta \in I(\alpha^{\min}, \alpha^{\max}), \alpha \neq \theta\}$ for $\theta \in [0, 2\uppi)$.
Indexed by $\theta \in [0, 2\uppi)$ let $(Z_n^{(\theta)})_{n \in \mathbb{N}} = (\Gamma_n^{(\theta)}, \Gamma_n^{(\theta), \min}, \Gamma_n^{(\theta), \max})_{n\in \mathbb{N}} $ be a Markov chain with transition kernel
\begin{align*}
	\Lambda_\theta \times \mathcal{B}([0, 2\uppi)^3)& \to [0,1]\\
	\big((\alpha, \alpha^{\min}, \alpha^{\max}), \mathsf{A}\big) &\mapsto \begin{multlined}[t]
		\mathbbm{1}_{I(\alpha, \alpha^{\max})}(\theta) \cdot \big(\mathrm{Unif}(I(\alpha, \alpha^{\max})) \otimes \delta_{(\alpha, \alpha^{\max})}\big) (\mathsf{A})\\
		+ \mathbbm{1}_{J(\alpha^{\min}, \alpha)}(\theta) \cdot \big(\mathrm{Unif}(I(\alpha^{\min}, \alpha))) \otimes \delta_{(\alpha^{\min}, \alpha)}\big) (\mathsf{A})
	\end{multlined}
\end{align*}
and initial distribution $\Gamma_1^{(\theta)} =\Gamma_1^{(\theta), \min}= \Gamma_1^{(\theta), \max}\sim \mathrm{Unif}([0, 2\uppi))$.
Now fix $\mathsf{S} \subseteq \mathbb{R}$ open, and $\ell,r \in \mathbb{R}$ with $\ell < r$ and $\mathsf{S} \cap (\ell,r) \neq \emptyset$.
Set 
\[
	h_{\ell,r} : [\ell,r) \to [0, 2\uppi), \qquad \theta \mapsto \frac{2\uppi}{r -\ell} \theta \mod 2 \uppi.
\]
For $\mathsf{A} \in \mathcal{B}(\mathbb{R})$ and $\theta \in (\ell,r)$ define
\begin{multline*}
	Q_{\mathsf{S}}^{\ell,r} (\theta,\mathsf{A}) := 
	\sum_{k=1}^{\infty} \mathbb{E}\bigg[ \prod_{i=1}^{k-1} \mathbbm{1}_{h_{\ell, r}(\mathsf{S} \cap (\ell,r))^\complement}(\Gamma_i^{(\theta)}) \mathbbm{1}_{I(\Gamma_k^{(\theta), \min}, \Gamma_k^{(\theta), \max})}\big(h_{\ell, r}(\theta)\big)\\
	\mathrm{Unif}\left (I(\Gamma_k^{(\theta), \min}, \Gamma_k^{(\theta), \max})\right )\Big(h_{\ell,r}\big(\mathsf{A} \cap \mathsf{S} \cap (\ell,r)\big)\Big)\bigg].
\end{multline*}
This is a transition kernel on $\mathsf{S} \cap (\ell, r)$, called the reeled shrinkage kernel.
We have the following lower bound.
\begin{lemma}\label{L: lower bound reeled shrinkage kernel}
	Let $\ell, r \in \mathbb{R}$ and $\mathsf{S} \subseteq \mathbb{R}$ open such that $\mathsf{S} \cap (\ell,r) \neq \emptyset$.
	For all $\theta \in \mathsf{S} \cap (\ell,r)$ holds
	\[
		Q^{\ell,r}_{\mathsf{S}}(\theta, \cdot) 
		\geq \frac{1}{\min\{r-\ell, \mathrm{diam}(\mathsf{S})\}}\ \mathrm{Leb}_{1}\big(\cdot \cap\, \mathsf{S} \cap (\ell,r)\big).
	\]
\end{lemma}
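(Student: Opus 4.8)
The plan is to push everything onto the circle and to identify the reeled shrinkage output as \emph{the first proposal that falls into the wrapped slice}. Fix $\theta\in\mathsf S\cap(\ell,r)$ and a Borel set $\mathsf A$; write $S':=h_{\ell,r}(\mathsf S\cap(\ell,r))$, $\theta':=h_{\ell,r}(\theta)\in S'$ and $A':=h_{\ell,r}(\mathsf A\cap\mathsf S\cap(\ell,r))\subseteq S'$. Since $h_{\ell,r}$ is a bijection $[\ell,r)\to[0,2\uppi)$ multiplying $\mathrm{Leb}_1$ by $2\uppi/(r-\ell)$, it suffices to show $Q^{\ell,r}_{\mathsf S}(\theta,\mathsf A)\ge \tfrac{2\uppi}{(r-\ell)\,|\mathcal C|}\,\mathrm{Leb}_1(\mathsf A\cap\mathsf S\cap(\ell,r))$, where $\mathcal C$ denotes the shortest closed arc containing $S'$ (all of $[0,2\uppi)$ if $S'$ lies in no proper arc) and $|\mathcal C|$ its length: indeed $|\mathcal C|\le 2\uppi$ always, and if $\mathrm{diam}(\mathsf S)<r-\ell$ then $S'$ sits inside an arc of length $2\uppi\,\mathrm{diam}(\mathsf S)/(r-\ell)$, so $|\mathcal C|\le 2\uppi\,\mathrm{diam}(\mathsf S)/(r-\ell)$; either way $\tfrac{2\uppi}{(r-\ell)\,|\mathcal C|}\ge\tfrac{1}{\min\{r-\ell,\mathrm{diam}(\mathsf S)\}}$. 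Writing $\mathcal A_k:=I(\Gamma^{(\theta'),\min}_k,\Gamma^{(\theta'),\max}_k)$ for the current arc, one first records that the chain keeps $\theta'\in\mathcal A_k$ and $\Gamma^{(\theta')}_k\mid\mathcal F_{k-1}\sim\mathrm{Unif}(\mathcal A_k)$, so that the defining series, after using $\mathbbm{1}_{\mathcal A_k}(\theta')\equiv1$ and the tower property, collapses to $Q^{\ell,r}_{\mathsf S}(\theta,\mathsf A)=\mathbb{P}\big(\Gamma^{(\theta')}_{\tau}\in A'\big)$ with $\tau:=\inf\{k:\Gamma^{(\theta')}_k\in S'\}$, almost surely finite because $Q^{\ell,r}_{\mathsf S}(\theta,\cdot)$ has total mass one.

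The key step is a coupling with the \emph{idealised} run obtained by replacing $S'$ by the open arc $\mathrm{int}(\mathcal C)\supseteq S'$: this run is driven by the very same Markov chain $(Z^{(\theta')}_n)_n$ --- which never sees the slice --- and is stopped at $\tilde\tau:=\inf\{k:\Gamma^{(\theta')}_k\in\mathrm{int}(\mathcal C)\}$. I would prove that $\Gamma^{(\theta')}_{\tilde\tau}\sim\mathrm{Unif}(\mathcal C)$. The engine is the invariant $\mathcal C\subseteq\mathcal A_k$ for all $k\le\tilde\tau$, shown by induction: it holds for $\mathcal A_1=\mathcal A_2=[0,2\uppi)$, and at a reeling step the split point $\Gamma^{(\theta')}_k$ (and, at $k=2$, the ``seam'' $\Gamma^{(\theta')}_1$) lies outside $\mathcal C$ almost surely, because the run has not yet stopped; hence the connected arc $\mathcal C$ lies wholly in one of the two sub-arcs produced by the split, namely the one containing $\theta'\in\mathcal C$ --- which is exactly the one retained. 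Therefore, conditionally on $\{\tilde\tau=k\}$, the output $\Gamma^{(\theta')}_k$ is $\mathrm{Unif}(\mathcal A_k)$ restricted and renormalised to $\mathrm{int}(\mathcal C)$, i.e.\ $\mathrm{Unif}(\mathcal C)$ irrespective of $k$; since $\tilde\tau<\infty$ almost surely (again by total mass one, or from $\mathbb{P}(\Gamma^{(\theta')}_k\in\mathrm{int}(\mathcal C)\mid\mathcal F_{k-1})\ge|\mathcal C|/2\uppi>0$), this gives $\Gamma^{(\theta')}_{\tilde\tau}\sim\mathrm{Unif}(\mathcal C)$.

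To conclude, note that on $\{\Gamma^{(\theta')}_{\tilde\tau}\in S'\}$ the real run has likewise not stopped before step $\tilde\tau$, so $\tau=\tilde\tau$ and the two outputs coincide; hence for every Borel $B\subseteq S'$,
\[
\mathbb{P}\big(\Gamma^{(\theta')}_{\tau}\in B\big)\ \ge\ \mathbb{P}\big(\Gamma^{(\theta')}_{\tilde\tau}\in B,\ \Gamma^{(\theta')}_{\tilde\tau}\in S'\big)\ =\ \mathbb{P}\big(\Gamma^{(\theta')}_{\tilde\tau}\in B\big)\ =\ \frac{\mathrm{Leb}_1(B)}{|\mathcal C|}.
\]
Taking $B=A'$ and transporting back through $h_{\ell,r}$ yields the assertion; note that the trivial case $\mathcal C=[0,2\uppi)$ just reproduces the bound coming from the $k=1$ term of the defining series. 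I expect the only genuinely fiddly point to be the verification of the invariant $\mathcal C\subseteq\mathcal A_k$ against the half-open conventions in the definitions of $I(\cdot,\cdot)$, $J(\cdot,\cdot)$ and the degenerate encoding $I(\alpha,\alpha)=[0,2\uppi)$ of the full circle, together with pinning down the almost-sure finiteness of $\tau$ and $\tilde\tau$.
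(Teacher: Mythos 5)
Your argument is correct, and it reaches the lemma by a somewhat different route than the paper. You recast $Q^{\ell,r}_{\mathsf S}(\theta,\cdot)$ as the law of $\Gamma^{(\theta')}_{\tau}$ at the first entrance time into the wrapped slice and couple it with the run stopped on (the interior of) the circular hull $\mathcal C$ of $S'$, for which you prove the output is exactly $\mathrm{Unif}(\mathcal C)$; the paper instead never leaves the defining series: it inserts the extra indicator that the proposals also avoid $h_{\ell,r}\big(\mathrm{conv}(\mathsf S\cap(\ell,r))\big)$, proves the same containment invariant (image of the hull $\subseteq I(\Gamma_k^{(\theta),\min},\Gamma_k^{(\theta),\max})$, by the identical connectedness argument you use), and then identifies the remaining series with the total mass $Q^{\ell,r}_{\mathrm{conv}(\mathsf S)}(\theta,\mathrm{conv}(\mathsf S))=1$. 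Two genuine differences are worth noting. First, your hull is the shortest arc containing $S'$ rather than the image of $\mathrm{conv}(\mathsf S\cap(\ell,r))$; when the wrapped slice straddles the seam $h_{\ell,r}(\ell)\equiv h_{\ell,r}(r)$ this arc can be strictly shorter, so your intermediate bound $\mathrm{Leb}_{1}\big(h_{\ell,r}(\mathsf A\cap\mathsf S\cap(\ell,r))\big)/\mathrm{Leb}_1(\mathcal C)$ can be sharper than the paper's $\mathrm{Leb}_{1}(\mathsf A\cap\mathsf S\cap(\ell,r))/\mathrm{Leb}_{1}\big(\mathrm{conv}(\mathsf S\cap(\ell,r))\big)$, although both collapse to the stated constant. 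Second, your geometric tail bound for $\tilde\tau$ and the exact uniformity of $\Gamma^{(\theta')}_{\tilde\tau}$ make the proof independent of the normalisation $Q^{\ell,r}_{\mathrm{conv}(\mathsf S)}(\theta,\mathrm{conv}(\mathsf S))=1$, which the paper invokes without proof here. The one step you should spell out (you flag it, but it is load-bearing) is that the collapse of the series to $\mathbb P(\Gamma^{(\theta')}_{\tau}\in A')$ requires $\mathbbm 1_{I(\Gamma_k^{(\theta'),\min},\Gamma_k^{(\theta'),\max})}\big(h_{\ell,r}(\theta)\big)=1$ almost surely for every $k$, i.e.\ that the chain stays in $\Lambda_{h_{\ell,r}(\theta)}$; if this indicator could vanish you would only obtain an upper bound on $Q^{\ell,r}_{\mathsf S}(\theta,\mathsf A)$, which is the wrong direction. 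It does hold, by the same induction on the splitting rule that drives your invariant $\mathcal C\subseteq I(\Gamma_k^{(\theta'),\min},\Gamma_k^{(\theta'),\max})$ (including the $k=1,2$ seam cases), so this is a matter of writing it out rather than a gap.
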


\begin{proof}
	Let $\mathsf{A} \in \mathcal{B}(\mathbb{R})$.
	Rewriting every summand, we get
	\begin{align*}
		Q^{\ell,r}_{\mathsf{S}}(\theta, \mathsf{A}) 
		& = \begin{multlined}[t]
			\sum_{k=1}^{\infty} \mathbb{E}\Bigg[ \prod_{i=1}^{k-1} \mathbbm{1}_{h_{\ell, r}(\mathsf{S} \cap (\ell,r))^\complement}(\Gamma_i^{(\theta)}) \mathbbm{1}_{I(\Gamma_k^{(\theta), \min}, \Gamma_k^{(\theta), \max})}\big(h_{\ell, r}(\theta)\big)\\
		\frac{\mathrm{Leb}_{1}\Big( h_{\ell,r}\big(\mathrm{conv}(\mathsf{S} \cap (\ell,r))\big)\Big)}{\mathrm{Leb}_{1}\big(I(\Gamma_k^{(\theta), \min}, \Gamma_k^{(\theta), \max})\big)}\\
		\frac{\mathrm{Leb}_{1}\left(h_{\ell,r}\big(\mathsf{A} \cap \mathsf{S} \cap (\ell,r)\big) \cap I(\Gamma_k^{(\theta), \min}, \Gamma_k^{(\theta), \max})\right)}{\mathrm{Leb}_{1}\Big( h_{\ell,r}\big(\mathrm{conv}(\mathsf{S} \cap (\ell,r))\big)\Big)} \Bigg]
		\end{multlined}\\
		& \geq \begin{multlined}[t]
			\sum_{k=1}^{\infty} \mathbb{E}\Bigg[ \prod_{i=1}^{k-1} \mathbbm{1}_{h_{\ell, r}(\mathsf{S} \cap (\ell,r))^\complement}(\Gamma_i^{(\theta)}) \mathbbm{1}_{I(\Gamma_k^{(\theta), \min}, \Gamma_k^{(\theta), \max})}\big(h_{\ell, r}(\theta)\big)\\
			\prod_{j=1}^{k-1} \mathbbm{1}_{h_{\ell,r}(\mathrm{conv} (\mathsf{S} \cap (\ell, r)))^\complement}(\Gamma_j^{(\theta)}) 
			\frac{\mathrm{Leb}_{1}\Big( h_{\ell,r}\big(\mathrm{conv} (\mathsf{S} \cap (\ell,r))\big)\Big)}{\mathrm{Leb}_{1}\big(I(\Gamma_k^{(\theta), \min}, \Gamma_k^{(\theta), \max})\big)}\\
			\frac{\mathrm{Leb}_{1}\left(h_{\ell,r}\big(\mathsf{A} \cap \mathsf{S} \cap (\ell,r)\big) \cap I(\Gamma_k^{(\theta), \min}, \Gamma_k^{(\theta), \max})\right)}{\mathrm{Leb}_{1}\Big( h_{\ell,r}\big(\mathrm{conv}(\mathsf{S} \cap (\ell,r))\big)\Big)} \Bigg]
		\end{multlined}
	\end{align*}
	Fix $k \in \mathbb{N}$.
	We show that 	$\Gamma_1^{(\theta)}, \ldots, \Gamma_{k-1}^{(\theta)} \notin h_{\ell,r}(\mathrm{conv}(\mathsf{S} \cap (\ell, r)))$ and $h_{\ell,r}(\theta) \in I(\Gamma_k^{(\theta), \min}, \Gamma_k^{(\theta), \max})$ implies that $h_{\ell,r}(\mathrm{conv}(\mathsf{S} \cap (\ell, r))) \subseteq I(\Gamma_k^{(\theta), \min}, \Gamma_k^{(\theta), \max})$ holds $\mathbb{P}$-almost surely.
	To this end, we equip $[0, 2\uppi)$ with the topology, where a set $\mathsf{B} \subseteq [0, 2\uppi)$ is called open if for all $x \in \mathsf{B}$ there exists $\upvarepsilon > 0$ such that $I(x-\upvarepsilon \mod 2\uppi, x+ \upvarepsilon \mod 2\uppi) \subseteq \mathsf{B}$.
	If 	$\Gamma_1^{(\theta)}, \ldots, \Gamma_{k-1}^{(\theta)}\notin h_{\ell,r}(\mathrm{conv} (\mathsf{S} \cap (\ell, r)))$,
	then $\Gamma_k^{(\theta), \min}, \Gamma_k^{(\theta), \max} \notin h_{\ell,r}(\mathrm{conv}(\mathsf{S} \cap (\ell, r)))$ holds $\mathbb{P}$-almost surely,
	because $\Gamma_k^{(\theta), \min}, \Gamma_k^{(\theta), \max} \in \{\Gamma_1^{(\theta)}, \ldots, \Gamma_{k-1}^{(\theta)}\}$ by construction $\mathbb{P}$-almost surely.
	As $I(\Gamma_k^{(\theta), \min}, \Gamma_k^{(\theta), \max})$ and $J( \Gamma_k^{(\theta), \max}, \Gamma_k^{(\theta), \min})$ form a partition of $[0, 2\uppi)$,
	this implies 
	\[
			h_{\ell,r}\Big(\mathrm{conv}\big(\mathsf{S} \cap (\ell,r)\big)\Big)
		\subseteq \mathrm{int}\big(I(\Gamma_k^{(\theta), \min}, \Gamma_k^{(\theta), \max})\big) \sqcup \mathrm{int}\big(J( \Gamma_k^{(\theta), \max}, \Gamma_k^{(\theta), \min})\big)
	\]
 	$\mathbb{P}$-almost surely.
	At the same time $h_{\ell,r}(\mathrm{conv}(\mathsf{S} \cap (\ell, r)))$ is connected, because $\mathrm{conv}(\mathsf{S} \cap (\ell,r))$ is connected and $h_{\ell,r}$ is continuous.
	Therefore $\mathbb{P}$-almost surely, we either have $	h_{\ell,r}\big(\mathrm{conv}(\mathsf{S} \cap (\ell,r))\big)
	\subseteq \mathrm{int}\big(I(\Gamma_k^{(\theta), \min}, \Gamma_k^{(\theta), \max})\big) $ or $	h_{\ell,r}\big(\mathrm{conv}(\mathsf{S} \cap (\ell,r))\big)
	\subseteq  \mathrm{int}\big(J( \Gamma_k^{(\theta), \max}, \Gamma_k^{(\theta), \min})\big)$, 
	because otherwise $h_{\ell,r}(\mathrm{conv} (\mathsf{S} \cap (\ell, r)))$ could be disjointly partitioned into two non-empty open sets.
	As $\theta \in \mathsf{S}$ by assumption, the condition $h_{\ell,r}(\theta) \in I(\Gamma_k^{(\theta), \min}, \Gamma_k^{(\theta), \max})$ only leaves 
	\begin{align*}
		h_{\ell,r}\Big(\mathrm{conv}\big(\mathsf{S} \cap (\ell,r)\big)\Big)
		&\subseteq \mathrm{int}\big(I(\Gamma_k^{(\theta), \min}, \Gamma_k^{(\theta), \max})\big) \\
		&\subseteq I(\Gamma_k^{(\theta), \min}, \Gamma_k^{(\theta), \max})\qquad \mathbb{P}\text{-almost surely}.
	\end{align*}
	Since in addition $h_{\ell,r}(\mathsf{S} \cap (\ell,r)) \subseteq h_{\ell,r}(\mathrm{conv}(\mathsf{S} \cap (\ell,r)))$ we obtain
	\begin{align*}
		Q^{\ell,r}_{\mathsf{S}}(\theta, \mathsf{A}) 
		&\geq \begin{multlined}[t]
				\frac{\mathrm{Leb}_{1}\left(h_{\ell,r}\big(\mathsf{A} \cap \mathsf{S} \cap (\ell,r)\big) \right)}{\mathrm{Leb}_{1}\Big( h_{\ell,r}\big(\mathrm{conv}\big(\mathsf{S} \cap (\ell,r)\big)\big)\Big)} 
				\sum_{k=1}^{\infty}\mathbb{E}\Bigg[ \prod_{i=1}^{k-1} \mathbbm{1}_{h_{\ell, r}(\mathsf{S} \cap (\ell,r))^\complement}(\Gamma_i^{(\theta)})\\ \mathbbm{1}_{I(\Gamma_k^{(\theta), \min}, \Gamma_k^{(\theta), \max})}\big(h_{\ell, r}(\theta)\big)
			\prod_{j=1}^{k-1} \mathbbm{1}_{h_{\ell,r}(\mathrm{conv} (\mathsf{S} \cap (\ell, r)))^\complement}(\Gamma_j^{(\theta)})\\ 
			\frac{\mathrm{Leb}_{1}\Big( h_{\ell,r}\big(\mathrm{conv} \big(\mathsf{S} \cap (\ell,r)\big)\big) \cap I(\Gamma_k^{(\theta), \min}, \Gamma_k^{(\theta), \max})\Big)}{\mathrm{Leb}_{1}\big(I(\Gamma_k^{(\theta), \min}, \Gamma_k^{(\theta), \max})\big)}\Bigg]
		\end{multlined}\\
		&= \begin{multlined}[t]
			\frac{\mathrm{Leb}_{1}\left(h_{\ell,r}\big(\mathsf{A} \cap \mathsf{S} \cap (\ell,r)\big) \right)}{\mathrm{Leb}_{1}\Big( h_{\ell,r}\big(\mathrm{conv}\big(\mathsf{S} \cap (\ell,r)\big)\big)\Big)} 
			\sum_{k=1}^{\infty}\mathbb{E}\Bigg[  \mathbbm{1}_{I(\Gamma_k^{(\theta), \min}, \Gamma_k^{(\theta), \max})}\big(h_{\ell, r}(\theta)\big)\\
			\prod_{j=1}^{k-1} \mathbbm{1}_{h_{\ell,r}(\mathrm{conv} (\mathsf{S} \cap (\ell, r)))^\complement}(\Gamma_j^{(\theta)}) \\
			\frac{\mathrm{Leb}_{1}\Big( h_{\ell,r}\big(\mathrm{conv} \big(\mathsf{S} \cap (\ell,r)\big)\big) \cap I(\Gamma_k^{(\theta), \min}, \Gamma_k^{(\theta), \max})\Big)}{\mathrm{Leb}_{1}\big(I(\Gamma_k^{(\theta), \min}, \Gamma_k^{(\theta), \max})\big)}\Bigg].
		\end{multlined}\\
	\end{align*}
	Note that the sum expression equals $ Q^{\ell,r}_{\mathrm{conv} (\mathsf{S})}(\theta, \mathrm{conv}(\mathsf{S})) = 1$.
	Hence, with a change of variables
	\begin{align*}
		Q^{\ell,r}_{\mathsf{S}}(\theta, \mathsf{A}) 
		&\geq \frac{\mathrm{Leb}_{1}\left(h_{\ell,r}\big(\mathsf{A} \cap \mathsf{S} \cap (\ell,r)\big) \right)}{\mathrm{Leb}_{1}\Big( h_{\ell,r}\big(\mathrm{conv}\big(\mathsf{S} \cap (\ell,r)\big)\big)\Big)}
		= \frac{\mathrm{Leb}_{1}\big(\mathsf{A} \cap \mathsf{S} \cap (\ell,r)\big)}{\mathrm{Leb}_{1}\big(\mathrm{conv}\big(\mathsf{S} \cap (\ell, r)\big)\big)}\\
		&\geq \frac{\mathrm{Leb}_{1}\big(\mathsf{A} \cap \mathsf{S} \cap (\ell,r)\big)}{\min\{r-\ell,\mathrm{Leb}_{1}(\mathrm{conv} (\mathsf{S}))\}}.
	\end{align*}
\end{proof}

\paragraph{Acknowledgements}
The author thanks Alain Durmus and Daniel Rudolf for fruitful discussions on this topic and valuable input for this paper.
Support by the DFG within project 432680300 – SFB 1456 subproject B02 is gratefully acknowledged.

 \bibliographystyle{amsalpha}
 \bibliography{sources.bib}  
\end{document}